\newtheorem{Thm}{Theorem}
\newtheorem{Problem}{Problem}
\newtheorem{Con}{Conjecture}
\newtheorem{Lem}{Lemma}
\newtheorem{Remark}{Remark}
\newtheorem{Def}{Definition}
\newtheorem{cor}{Corollary}
\newtheorem{Prop}{Proposition}
\newtheorem{algorithm}{Algorithm}
\theoremstyle{remark}
\newtheorem{Example}{Example}
\DeclareMathOperator{\Ap}{Ap}
\title{Cyclotomic numerical semigroups}
\author{Emil-Alexandru Ciolan}
\address{Rheinische Friedrich-Wilhelms-Universit\"at Bonn, Regina-Pacis-Weg 3, D-53113 Bonn, Germany}
\email{calexandru92@yahoo.com}
\author{Pedro A. Garc\'ia-S\'anchez}
\address{Departamento de \'Algebra, Universidad de Granada, E-18071 Granada, Espa\~na}
\email{pedro@ugr.es}
\thanks{The second author is supported by the projects MTM2010-15595, MTM2014-55367-P, FQM-343,  FQM-5849, and FEDER funds}
\author{Pieter Moree}
\address{Max-Planck-Institut f\"ur Mathematik, Vivatsgasse 7, D-53111 Bonn, Germany}
\email{moree@mpim-bonn.mpg.de}
\keywords{Numerical semigroups, cyclotomic polynomials, semigroup polynomials, cyclotomic numerical semigroups, cyclotomic exponents, polynomially related semigroups}
\subjclass[2010]{20M14, 11C08, 11B68}
\begin{document}
\date{}

\begin{abstract}
\noindent Given a numerical semigroup $S$, we let $\mathrm P_S(x)=(1-x)\sum_{s\in S}x^s$ be its {\it semigroup polynomial}. We
study {\it cyclotomic numerical semigroups}; these are numerical semigroups 
$S$ such that $\mathrm P_S(x)$ has all its roots in the unit disc. We conjecture
that $S$ is a cyclotomic numerical semigroup if and only if $S$ is a complete intersection 
numerical semigroup and present some evidence for it.\\
\indent Aside from the notion of cyclotomic numerical semigroups we
introduce the notion of \textit{cyclotomic exponents} and \textit{polynomially
related numerical semigroups}. We derive some properties and give
some applications of these new concepts.
\end{abstract}
\maketitle
\section{Introduction}
A \emph{numerical semigroup} $S$ is a submonoid of $\mathbb N$ (the set of nonnegative integers) under addition, with finite complement in $\mathbb N$. The nonnegative integers not in $S$ are its \emph{gaps}, and the largest integer not in $S$ is its \emph{Frobenius number}, $\mathrm F(S)$. The number of gaps of $ S, $ also known as the \textit{genus} of $ S ,$ is denoted by $ \mathrm g(S). $ A numerical semigroup admits a unique minimal generating system; its cardinality is called its \emph{embedding dimension} $\mathrm e(S)$, and its elements \emph{minimal generators}. The smallest positive integer in $S$ is called the \emph{multiplicity} of $S$, and it is denoted by $\mathrm m(S)$ (see for instance \cite{roos} for an introduction to numerical semigroups). 

For $A\subseteq\mathbb N$, we use $\langle A\rangle$ to denote the set of integers of the form $\sum_{a\in A}\lambda_a a$ where $a\in A$, $\lambda_a\in\mathbb N$ and all but finitely many $ \lambda_a $ are equal to zero. The set $\langle A\rangle$ is a numerical semigroup if and only if the greatest common divisor of the elements of $A$ equals 1.

To a numerical semigroup $S$ we can associate
$\mathrm H_S(x):=\sum_{s\in S}x^s,$ its {\it Hilbert series} 
(sometimes called the generating function associated to $S$), and $\mathrm P_S(x)=(1-x)\sum_{s\in S}x^s,$ its {\it semigroup polynomial}.
Since all elements larger than $\mathrm F(S)$ are in $S$, $\mathrm H_S(x)$ is not a polynomial, but
$\mathrm P_S(x)$ is. 
On noting that $\mathrm H_S(x)=(1-x)^{-1}-\sum_{s\not\in S}x^s$, we see that
\begin{equation}
\label{psx}
\mathrm P_S(x)=1+(x-1)\sum_{s\not \in S}x^s,
\end{equation}
where $s\not\in S$ denotes the sum over the numbers in
${\mathbb N}\backslash S$.
Observe that $\mathrm P_S(x)$ is a monic polynomial of degree $\mathrm F(S)+1$.  

Recall (see, for instance, Damianou \cite{Dam}) that a \emph{Kronecker polynomial} is a monic polynomial with integer coefficients having all its roots in the unit disc.
We define a numerical semigroup to be \emph{cyclotomic} if its semigroup polynomial is a Kronecker polynomial.
The following result of Kronecker 
and the fact that $\mathrm P_S(1)\ne 0$ allow us to give an alternative more explicit
definition (readers not so familiar with cyclotomic polynomials are 
referred to Section \ref{cpo}).
\begin{Lem} [Kronecker, 1857, cf. \cite{Dam}]
\label{leo}
If $f$ is a Kronecker polynomial with $f(0)\ne 0$, then all roots
of $f$ are actually on the unit circle and $f$ factorizes over
the rationals as a product of cyclotomic polynomials.
\end{Lem}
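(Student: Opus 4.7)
The plan is to follow the classical two-step Kronecker argument: first show that the roots lie on the unit circle, then use a compactness/finiteness trick to upgrade ``on the unit circle'' to ``roots of unity.''

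Let $\alpha_1,\ldots,\alpha_n$ denote the roots of $f$ (with multiplicity). For the first step, since $f$ is monic with integer coefficients, $|f(0)|=|\alpha_1\cdots\alpha_n|$ is a positive integer, hence at least $1$. Combined with the hypothesis $|\alpha_i|\le 1$ for every $i$, this forces $|\alpha_i|=1$ for all $i$. So after this step every root already lies on the unit circle.

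For the second (and harder) step, consider for each positive integer $k$ the polynomial
\[
f_k(x)=\prod_{i=1}^n (x-\alpha_i^k).
\]
Each $f_k$ is monic, and its coefficients are the elementary symmetric functions in $\alpha_1^k,\ldots,\alpha_n^k$; since these are symmetric polynomials in the $\alpha_i$ with integer coefficients, and the $\alpha_i$ are algebraic integers with minimal polynomial dividing $f\in\mathbb Z[x]$, the $f_k$ lie in $\mathbb Z[x]$. Because $|\alpha_i^k|=1$, the $j$-th elementary symmetric function is bounded in modulus by $\binom{n}{j}$, so the coefficients of the $f_k$ take only finitely many values. Consequently there exist $j<k$ with $f_j=f_k$, meaning the multisets $\{\alpha_i^j\}$ and $\{\alpha_i^k\}$ agree. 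That produces a permutation $\sigma$ of $\{1,\dots,n\}$ with $\alpha_i^k=\alpha_{\sigma(i)}^j$; iterating and choosing $m$ with $\sigma^m=\mathrm{id}$ gives $\alpha_i^{k^m}=\alpha_i^{j^m}$, hence $\alpha_i^{k^m-j^m}=1$, so each $\alpha_i$ is a root of unity.

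The final step is routine: by Gauss's lemma the factorization of the monic polynomial $f$ into $\mathbb Q$-irreducibles is actually a factorization in $\mathbb Z[x]$, and each irreducible factor is (up to sign) the minimal polynomial over $\mathbb Q$ of one of its roots. Since every root of $f$ is a root of unity, each such minimal polynomial is a cyclotomic polynomial, and the claim follows. The main obstacle is the second step; the coefficient-boundedness argument is the slickest way to handle it, and I would otherwise have had to invoke heavier machinery (e.g.\ Northcott's theorem, or a direct bound on the Mahler measure), so this pigeonhole reduction is where the content really sits.
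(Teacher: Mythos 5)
Your proof is correct and is precisely the classical Kronecker argument (product of roots has modulus $|f(0)|\ge 1$ to get onto the unit circle, then the pigeonhole on the integer polynomials $f_k(x)=\prod_i(x-\alpha_i^k)$ with bounded coefficients to get roots of unity, then Gauss's lemma plus irreducibility of the $\Phi_n$ to conclude). The paper gives no proof of its own and simply cites Damianou, where this same argument appears, so there is nothing to contrast.
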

\begin{Def}
\label{cy}
A numerical semigroup is \emph{cyclotomic} if its semigroup polynomial 
factorizes over the rational numbers as a product of cyclotomic polynomials, 
that is, if we can write 
\begin{equation}
\label{basaal}
\mathrm P_S(x)=\prod_{d\in {\mathcal D}}\Phi_d(x)^{e_d},
\end{equation} 
with  ${\mathcal D}$ a finite set of positive integers and every $e_d$ a positive integer.
\end{Def}
\begin{Remark}
Since  cyclotomic polynomials are irreducible, the exponents
$e_d$ are unique.
\end{Remark}
\begin{Remark}
\label{ien}
On using that $\Phi_n$ is selfreciprocal for $n>1$ and that
$\Phi_1$ does not divide $\mathrm P_S$, we infer that 
if $S$ is cyclotomic, then $\mathrm P_S$ is selfreciprocal.
\end{Remark}
We can now formulate the main problem we like to address:
\begin{Problem}
\label{mainproblemo}
Find an intrinsic characterization of the numerical semigroups $S$ for which $S$ is cyclotomic, that is, one which does not involve $\mathrm P_S$ or its roots in any way.
\end{Problem}
Our conjectural solution of this problem involves two classes of numerical
semigroups: the symmetric ones and the complete intersection ones.\\
\indent Recall that a numerical semigroup $S$ is said to be \emph{symmetric} if $S\cup (\mathrm F(S)-S)=\mathbb Z$, thus 
symmetry is an example of an intrinsic characterization of $S$.
\begin{Thm}
\label{cycsym}
If $S$ is cyclotomic, then it must be symmetric.
\end{Thm}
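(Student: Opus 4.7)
The plan is to exploit Corollary~\ref{ien}, which guarantees that $\mathrm P_S$ is selfreciprocal, and to translate this polynomial identity into the combinatorial statement defining symmetry.

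Write $F=\mathrm F(S)$ so that $\mathrm P_S$ has degree $F+1$ and selfreciprocity reads $a_s=a_{F+1-s}$ for every $s\in\mathbb Z$, where $a_s$ denotes the coefficient of $x^s$ in $\mathrm P_S$ (with $a_s=0$ outside $\{0,\ldots,F+1\}$). To connect the coefficients with the set $S$ itself, I would bypass the case analysis of Lemma~\ref{coeff-PS} and instead expand $\mathrm P_S(x)=(1-x)\mathrm H_S(x)$ directly: letting $\chi_S$ denote the characteristic function of $S$, with the convention $\chi_S(n)=0$ for $n<0$, one obtains the single formula $a_s=\chi_S(s)-\chi_S(s-1)$.

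Substituting this expression into $a_s=a_{F+1-s}$ and rearranging yields
\[
\chi_S(s)+\chi_S(F-s)=\chi_S(s-1)+\chi_S(F-(s-1)).
\]
Thus the function $\psi(s):=\chi_S(s)+\chi_S(F-s)$ is constant on $\mathbb Z$. Since $0\in S$ and $F\notin S$, we have $\psi(0)=1+0=1$, hence $\psi\equiv 1$. This says exactly that for each integer $s$ precisely one of $s$ and $F-s$ lies in $S$; in particular $S\cup (F-S)=\mathbb Z$, i.e.\ $S$ is symmetric.

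I do not anticipate a serious obstacle: once the dictionary $a_s=\chi_S(s)-\chi_S(s-1)$ is in place, the selfreciprocity identity telescopes to the constancy of $\psi$, and the boundary value $\psi(0)=1$ pins down the constant. The only minor care point is to handle the conventions $\chi_S(-1)=0$ and $\chi_S(n)=1$ for $n>F$ correctly, so that the derivation extends from $\{0,\ldots,F+1\}$ to all of $\mathbb Z$.
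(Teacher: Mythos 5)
Your proof is correct and follows the same route as the paper: both deduce selfreciprocity of $\mathrm P_S$ from Corollary~\ref{ien} and then pass from selfreciprocity to symmetry. The only difference is that the paper cites Moree \cite{Moree} for the equivalence ``$S$ symmetric $\iff$ $\mathrm P_S$ selfreciprocal,'' whereas you supply the needed implication directly via the identity $a_s=\chi_S(s)-\chi_S(s-1)$ and the telescoping argument, which is a clean and valid way to fill in that cited step.
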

\begin{proof} Using \eqref{psx} it is not difficult to conclude (see Moree \cite{Moree}) that
$S$ is symmetric if and only if $\mathrm P_S$ is selfreciprocal. By Remark \ref{ien}, $\mathrm P_S$ is selfreciprocal. 
\end{proof}
The converse is however not true, as illustrated at the end of Section \ref{conj+sym}.

\indent New let us recall the definition of \emph{complete intersection} numerical semigroups. If $S$ is minimally generated by $\{n_1,\ldots,n_e\}$, then the monoid morphism $\phi: \mathbb N^e\to S$, $\phi(a_1,\ldots,a_e)=\sum_{i=1}^e a_in_i$ is an epimorphism. Consequently $S$ is isomorphic, as a monoid, to $\mathbb N^e/\ker \phi$, where $\ker\phi=\{(a,b)\in\mathbb N^e\times \mathbb N^e: \phi(a)=\phi(b)\}$ is the kernel congruence of $\phi$. It turns out that $\ker\phi$ is finitely generated (as a congruence) and that the minimum number of generators is at least $\mathrm e(S)-1$ (see for instance \cite{roos}). Then $S$ is a \emph{complete intersection} numerical semigroup if $\ker \phi$ is minimally generated by $\mathrm e(S)-1$ pairs. It is well-known that every complete intersection numerical semigroup is symmetric (see for instance \cite[Corollary 9.17]{roos}).
\par The following observation is not deep, see Section \ref{glueglue}.
\begin{Lem}
\label{ciisc}
Every complete intersection numerical semigroup is cyclotomic.
\end{Lem}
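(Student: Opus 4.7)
The plan is to argue by induction using the well-known fact that the complete intersection numerical semigroups are precisely those obtained from $\mathbb{N}$ by a finite sequence of gluings of complete intersection numerical semigroups. So I would take as granted: if $S$ is a complete intersection with $\mathrm{e}(S)\ge 2$, then $S = d_1 S_1 + d_2 S_2$ is a gluing, where $S_1, S_2$ are complete intersection numerical semigroups with $\mathrm{e}(S_i) < \mathrm{e}(S)$, and $d_1, d_2$ are positive integers with $\gcd(d_1,d_2)=1$.

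The base case $S = \mathbb{N}$ is trivial since $\mathrm{P}_\mathbb{N}(x) = 1$ is a (degenerate) Kronecker polynomial. For the inductive step, I would first establish the gluing formula for the Hilbert series: the uniqueness of representations $s = d_2 s_1 + d_1 s_2$ up to the syzygy $(s_1,s_2)\mapsto(s_1-d_1,s_2+d_2)$ (which holds because of the coprimality condition in a gluing) gives
\[
\mathrm{H}_S(x) \;=\; (1-x^{d_1d_2})\,\mathrm{H}_{S_1}(x^{d_2})\,\mathrm{H}_{S_2}(x^{d_1}).
\]
Multiplying by $(1-x)$ and rewriting $\mathrm{H}_{S_i} = \mathrm{P}_{S_i}/(1-x)$ yields the semigroup polynomial version
\[
\mathrm{P}_S(x) \;=\; \frac{(1-x)(1-x^{d_1d_2})}{(1-x^{d_1})(1-x^{d_2})}\,\mathrm{P}_{S_1}(x^{d_2})\,\mathrm{P}_{S_2}(x^{d_1}).
\]

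The step that requires a little care is to check that the rational prefactor is actually a polynomial whose roots lie on the unit circle. Using $1-x^n = \prod_{d\mid n}\Phi_d(x)$ together with $\gcd(d_1,d_2)=1$, every divisor of $d_1 d_2$ factors uniquely as $ab$ with $a\mid d_1$ and $b\mid d_2$, so cancellation gives
\[
\frac{(1-x)(1-x^{d_1d_2})}{(1-x^{d_1})(1-x^{d_2})} \;=\; \prod_{\substack{a\mid d_1,\,a>1\\ b\mid d_2,\,b>1}}\Phi_{ab}(x),
\]
a product of cyclotomic polynomials (necessarily indexed by composite integers, in agreement with Lemma~\ref{lem-basaal}).

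To finish, by the induction hypothesis $\mathrm{P}_{S_1}$ and $\mathrm{P}_{S_2}$ are Kronecker; since the roots of $\mathrm{P}_{S_i}$ are roots of unity, the same is true of the roots of $\mathrm{P}_{S_i}(x^{d_{3-i}})$, so both factors remain Kronecker. Therefore $\mathrm{P}_S(x)$ is a product of three Kronecker polynomials, hence Kronecker, and $S$ is cyclotomic. The only real obstacle is the bookkeeping in deriving the gluing formula for $\mathrm{H}_S$; everything else is a clean cyclotomic identity plus induction.
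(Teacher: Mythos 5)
Your proposal is correct and follows essentially the same route as the paper: Delorme's gluing characterization of complete intersections combined with the Hilbert-series gluing formula (the paper's \eqref{hil-gluing} and \eqref{pol-gluing}) and the cyclotomic factorization of the prefactor as in \eqref{Pa1a2}, the paper merely unrolling the recursion into the closed product \eqref{pol-ci} instead of phrasing it as an induction on the embedding dimension. (The only blemish is a harmless index swap: you write $S=d_1S_1+d_2S_2$ but then represent elements as $d_2s_1+d_1s_2$; with either convention fixed consistently the argument goes through.)
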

The next lemma sums up the above\footnote{The referee suggested the following mnemonic as an easy way to remember the order of inclusions. Look at the initials of the words and sort them alphabetically: $ \text{CI}\subseteq\text{CY}\subseteq\text{SY}. $ }.
\begin{Lem}
\label{inclusie}
We have the following inclusions of numerical semigroups
$$\{\text{complete intersection}\}\subseteq \{\text{cyclotomic}\} \subseteq 
\{\text{symmetric}\}.$$
\end{Lem}
Let ${\mathcal I}_k$, ${\mathcal C}_k$ 
and ${\mathcal S}_k$ denote the set of numerical semigroups that have
Frobenius number $k$ and are complete intersection, cyclotomic and 
symmetric, respectively.
Using the GAP package \cite{numericalsgps} it is seen that
   ${\mathcal I}_k={\mathcal C}_k$ for $k\le 70$. If in the sequel
we state that a polynomial is Kronecker or not, this
was always established using this package 
(using the Graeffe method based on \cite{effective}).

We conjecture that the first two sets in Lemma 
\ref{inclusie} are actually equal (that is, ${\mathcal I}_k={\mathcal C}_k$ for every $k\ge 1$).
\begin{Con}\label{conjecture}
A numerical semigroup is cyclotomic if and only if it is a complete intersection numerical semigroup.
\end{Con}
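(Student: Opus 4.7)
The plan is to attempt a proof by strong induction on $\mathrm F(S)$ (equivalently on $\deg\mathrm P_S$). The case $\mathrm e(S)\le 3$ is already settled by the preceding remarks, so the content lies in the step $\mathrm e(S)\ge 4$. The strategy is to exhibit any cyclotomic $S$ as a nontrivial gluing $S=a_2 S_1+a_1 S_2$ with $\gcd(a_1,a_2)=1$, where $S_1,S_2$ are themselves cyclotomic numerical semigroups of strictly smaller Frobenius number. By the induction hypothesis $S_1$ and $S_2$ would then be complete intersections, and the gluing of two complete intersections is again a complete intersection (this is essentially the inductive definition of that class, see Section \ref{glueglue}), whence so is $S$.

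The organising device is the gluing formula for semigroup polynomials, obtained by combining the definition of gluing with the elementary identity
\[
\frac{(1-x)(1-x^{a_1 a_2})}{(1-x^{a_1})(1-x^{a_2})}=\prod_{\substack{d_1\mid a_1,\;d_1>1\\ d_2\mid a_2,\;d_2>1}}\Phi_{d_1 d_2}(x)\qquad(\gcd(a_1,a_2)=1),
\]
which gives
\[
\mathrm P_S(x)=\mathrm P_{S_1}(x^{a_2})\,\mathrm P_{S_2}(x^{a_1})\prod_{\substack{d_1\mid a_1,\;d_1>1\\ d_2\mid a_2,\;d_2>1}}\Phi_{d_1 d_2}(x)
\]
whenever $S=a_2 S_1+a_1 S_2$ is a gluing. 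Starting from the factorisation $\mathrm P_S(x)=\prod_{d\in\mathcal D}\Phi_d(x)^{e_d}$ provided by Lemma \ref{lem-basaal}, one tries to locate coprime integers $a_1,a_2>1$---natural candidates being minimal generators of $S$, or prime divisors of elements of $\mathcal D$---so that the multiset $\{(d,e_d)\}$ partitions into three blocks of the required shapes. The split is constrained by the standard identities $\Phi_n(x^p)=\Phi_{np}(x)$ when $p\mid n$ and $\Phi_n(x^p)=\Phi_n(x)\Phi_{np}(x)$ when $p\nmid n$, which tell one exactly how each $\Phi_{d}(x^{a_i})$ expands in terms of the $\Phi_e(x)$. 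When $\mathrm m(S)$ is prime it is a particularly natural candidate for one of the $a_i$, and the Apéry set $\mathrm{Ap}(S,\mathrm m(S))$ should carry enough structure to witness the proposed decomposition at the combinatorial level.

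The principal obstacle is the passage from a polynomial identity to a genuine semigroup identity. Even granting a candidate decomposition of $\mathrm P_S(x)$, one must verify that the putative factors $Q_i(x)$ are honest semigroup polynomials $\mathrm P_{S_i}(x)$---that is, that their coefficients satisfy the alternating $\pm1$ pattern of Lemma \ref{coeff-PS}, that $Q_i(0)=1$, and that they are self-reciprocal---and moreover that the combinatorial sum $a_2 S_1+a_1 S_2$ actually reproduces $S$. The Kronecker factorisation pins down only the \emph{exponents} appearing in $\mathrm P_S$, whereas semigroup membership is a much finer condition, and a priori there is no reason for a Kronecker polynomial of the correct degree and constant term to be a semigroup polynomial at all. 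Bridging this gap will presumably require exploiting Theorem \ref{cycsym} (symmetry) together with the strong combinatorial restrictions on $\mathcal D$ coming from Lemma \ref{lem-basaal} (every $d\in\mathcal D$ is composite). As a sanity check, one should verify that the scheme correctly \emph{fails} to produce a decomposition for the minimal non-cyclotomic symmetric examples $\langle 5,6,7,8\rangle$ and $\langle 5,7,8,9\rangle$, whose semigroup polynomials are not products of cyclotomic polynomials at all; only once the search procedure can be shown to terminate successfully for every cyclotomic $S$ will the induction close.
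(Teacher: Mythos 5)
There is a fundamental mismatch here: the statement you are asked to prove is labelled a \emph{Conjecture} in the paper, and the authors explicitly state that they have not proved it --- they report only computational evidence (all symmetric numerical semigroups with Frobenius number at most $70$) and describe a strategy, via the Euler characteristic $\chi_S$ and the Betti elements of $S$, that they tried ``without success so far''. So there is no proof in the paper to compare yours against, and your proposal, which candidly stops at a strategy sketch, does not close the gap either. The skeleton you choose is the natural one and is consistent with the machinery of Section \ref{glueglue}: by Delorme's theorem a numerical semigroup is a complete intersection if and only if it is $\mathbb N$ or a gluing of two complete intersections, so an induction that exhibits every cyclotomic $S$ with $\mathrm e(S)\ge 2$ as a gluing of two cyclotomic semigroups of smaller Frobenius number would indeed prove the conjecture, using \eqref{pol-gluing} (your displayed gluing formula is a correct restatement of it together with \eqref{Pa1a2}).

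The genuine gap is exactly the one you name and then do not fill: producing the gluing. Lemma \ref{lem-basaal} gives only the multiset of cyclotomic factors of $\mathrm P_S$, and nothing in your proposal (nor in the paper) shows how to extract from that data coprime integers $a_1,a_2$ and genuine numerical semigroups $S_1,S_2$ with $S=a_1S_1+_{a_1a_2}a_2S_2$; as you observe, a Kronecker polynomial with the right degree, constant term and self-reciprocity need not be a semigroup polynomial at all, and semigroup membership is far finer than any information carried by the exponents $e_d$. ``Natural candidates'' for $a_1,a_2$ and an appeal to the structure of $\mathrm{Ap}(S;\mathrm m(S))$ are not an argument, and the base case of your induction is also not cleanly set up (semigroups with $\mathrm e(S)\le 3$ occur at every Frobenius number, so they cannot serve as the base of an induction on $\mathrm F(S)$; one should instead treat $S=\mathbb N$ as the base and show every other cyclotomic $S$ decomposes). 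In short, your outline identifies the right reduction but leaves open precisely the step that makes this a conjecture rather than a theorem; the paper's partial results in this direction (the Remark on the factorization \eqref{hilbertdecompose} under the hypotheses $\mu>n_e$ and $\mathfrak d(s)\ge 2$ for some $s\le n_e$) show how restrictive the currently available hypotheses are.
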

The second set, however, is strictly contained in the third one (we have, e.g., 
${\mathcal C}_{9}\subsetneq {\mathcal S}_9$, see Section \ref{conj+sym}).
Here we make the following conjecture.
\begin{Con}
Let $e\ge 4$. There exists a symmetric numerical semigroup of
embedding dimension $e$ that is not cyclotomic.
\end{Con}
In the rest of the paper, the main focus is
on theoretical contributions towards solving Conjecture \ref{conjecture}
and some related problems.  

Throughout, the letters $p,q$ and $r$ are used to indicate primes. 

For a pedestrian introduction to both cyclotomic polynomials and 
numerical semigroups, the 
reader is referred to Moree \cite{Moree}.

\section{Tools}
\subsection{Cyclotomic polynomials}
\label{cpo}
In this section we discuss relevant (elementary) properties of cyclotomic polynomials.
A nice introduction to cyclotomic polynomials
is Thangadurai \cite{tanga}.\\
We let
$$\Phi_n(x)=\prod_{j=1,~(j,n)=1}^n\left(1-e^{\frac{2\pi ij}{n}}\right)$$
denote the $n$-th cyclotomic polynomial. It is well-known that it has integer
coefficients. Furthermore it is monic  
of degree $\varphi(n)$ (where $\varphi$ denotes Euler's totient function) and irreducible over $\mathbb Q$ (see, e.g., Weintraub \cite{wein}).
Over the rational numbers $x^m-1$ factorizes into irreducibles as
\begin{equation}
\label{uitelkaar}
x^m-1=\prod_{d\mid m}\Phi_d(x).
\end{equation}
This equation implies that $\Phi_d$ divides $x^n-1$ if and only if $d$ divides $n$.
By M\"obius inversion we infer from \eqref{uitelkaar} that
\begin{equation}
\label{prod1}
\Phi_n(x)=\prod_{d\mid n}(x^d-1)^{\mu(n/d)},
\end{equation}
where $\mu(n)$ denotes the M\"obius function. 
It follows, for example from \eqref{prod1}, that
if $p$ and $q$ are distinct primes, then 
\begin{equation}
\label{basic-2}
\Phi_{pq}(x)=\frac{(x^{pq}-1)(x-1)}{(x^p-1)(x^q-1)}.
\end{equation} 
Using \eqref{prod1} it is easily shown 
that
\begin{equation}
\label{flauuw}
\Phi_{pn}(x)=\Phi_n(x^p){\rm ~if~}p\mid n.
\end{equation}

On invoking the fact that $\sum_{d\mid n}\mu(d)=0$ for $n>1$, we infer from \eqref{prod1} that
\begin{equation}
\label{phi-signchange}
\Phi_n(x)=\prod_{d\mid n}(1-x^d)^{\mu(n/d)},
\end{equation}
and, on using the identity $\sum_{d\mid n}d\mu(n/d)=\varphi(n)$, we deduce that
\begin{equation}\label{phi-selfrecirpcal}
x^{\varphi(n)}\Phi_n\left(\frac{1}{x}\right)=\Phi_n(x).
\end{equation}
Hence $\Phi_n$ is selfreciprocal for $n>1$. Note that $\Phi_1(x)=x-1$ is not selfreciprocal.\\
\indent It is a well-known fact, see, e.g., Lang \cite[p. 74]{Lang}, that
\begin{equation}
\label{pn1}
\Phi_n(1)=\begin{cases}0 & \hbox{ if } n=1;\\
p & \hbox{ if } n=p^m;\\
1 & \hbox{otherwise}.\end{cases}
\end{equation}
Note that $\Phi_1(-1)=-2$ and $\Phi_2(-1)=0$. For $n>2$ we have
\begin{equation}
\label{pn2}
\Phi_n(-1)=\begin{cases}
p & \hbox{ if } n=2p^m;\\
1 & \hbox{otherwise},\end{cases}
\end{equation}
which follows from (\ref{flauuw}) and
the observation that $\Phi_{2m}(x)=\Phi_m(-x)$ if $m>1$ is odd (for a
different proof, see \cite{Dresden}).

\subsection{Semigroup polynomials.} In this section we establish some basic, yet useful facts relating a numerical semigroup to its polynomial.
\begin{Lem}\label{coeff-PS}
Let $S$ be a numerical semigroup and assume that $\mathrm P_S(x)=a_0+a_1x+\cdots+ a_kx^k$. Then, for $s\in \{0,\ldots, k\}$,
\[
a_s=\begin{cases}
1 & \hbox{if } s\in S\hbox{ and }s-1\not\in S;\\
-1 & \hbox{if } s\not\in S\hbox{ and }s-1\in S;\\
0 &\hbox{otherwise}.
\end{cases}
\]
\end{Lem}
\begin{proof}
The proof easily follows from the fact that $\mathrm P_S(x)=(1-x)\mathrm H_S(x)$ and that a coefficient of $x\mathrm H_S(x)$ is 1 if and only if its degree minus one belongs to $S$.
\end{proof}

\begin{cor}
\label{corrie}
The nonzero coefficients of $\mathrm P_S(x)$ alternate between $1$ and $-1$.
\end{cor}
\begin{Lem}
\label{gaps}
Let $S\ne \mathbb N$ be a numerical semigroup. Then 
\begin{enumerate}[a)]
\item $ \mathrm{P}_S(1)=1; $
\item $ \mathrm P_S'(1)=\mathrm g(S). $
\end{enumerate}
\end{Lem}
\begin{proof}~\\
a) Immediate from \eqref{psx}.\\
b) The condition $ S\ne \mathbb N $ is equivalent to $ 1\notin S, $ ensuring $ \mathrm m(S)\ge 2. $ On using Lemma \ref{coeff-PS} together with the fact that the first nonzero element of $ S $ is $ \mathrm m(S) $ and the last gap of $ S $ is $ \mathrm F(S), $  we note that there exist $ 2\le k_1<\cdots<k_{2n+1} $ such that 
\begin{equation}
\label{maybe}
\mathrm P_S(x)=1-x+x^{k_1}-x^{k_2}+\cdots-x^{k_{2n}}+x^{k_{2n+1}}.
\end{equation}
In fact, $k_1=\mathrm m(S) $ and $ k_{2n+1}=\mathrm F(S)+1. $ Lemma \ref{coeff-PS} tells us that \begin{equation}
\label{gapss}
\mathbb N\backslash S= [1,k_1-1]\cup[k_2,k_3-1]\cup\ldots\cup[k_{2n},k_{2n+1}-1] 
\end{equation} Differentiating \eqref{maybe} yields 
\[\mathrm P_S'(x)=(-1+k_1x^{k_1-1})+\cdots+(-k_{2n}x^{k_{2n}-1}+k_{2n+1}x^{k_{2n+1}-1})\] and evaluating at 1 gives 
\begin{equation}\label{gapsss}
\mathrm P_S'(1)=(k_1-1)+(k_3-k_2)+\cdots+(k_{2n+1}-k_{2n}).\end{equation}
The conclusion now follows on comparing \eqref{gapss} and \eqref{gapsss}.
\end{proof}
Note that
\begin{equation}
\label{atmineen}
\mathrm P_S(-1)=1+2g(S)-4\sum_{s\not\in S\atop 2\mid s}1
\end{equation}
is an odd integer and hence nonzero.
\begin{Lem}\label{pieter-cond}
If $S$ is a numerical semigroup satisfying
$$\sum_{s\not\in S,~2\nmid s}1<\sum_{s\not\in S,~2|s}1,$$
then $S$ is not cyclotomic. 
\end{Lem}
\begin{proof}
On using (\ref{atmineen}) the latter inequality is 
seen to be equivalent with $\mathrm P_S(-1)<0$. Now assume
that $S$ is cyclotomic.
Then
by (\ref{pn2}) always $\Phi_n(-1)\ge 0$ and hence $\mathrm P_S(-1)\ge 0$.
This contradiction finishes the proof.
\end{proof}

\begin{Example}
Take $S=\langle 3,5,7\rangle$. It has one odd gap and two even gaps. By Lemma \ref{pieter-cond}, $S$ is not cyclotomic. Observe that $S$ is not even symmetric. 

The converse of Lemma \ref{pieter-cond} is not true. The numerical semigroup $S=\langle 5, 6, 7, 8\rangle$ is not cyclotomic; there are three odd gaps in $S$ and 
two even gaps.

We took all numerical semigroups $S$ that are symmetric and not complete intersection with Frobenius number $\le k$ and determined how often on average 
Lemma \ref{pieter-cond} applies. Our computations 
(with $k\le 69$) indicate that likely an average
exists and is in $[0.8,0.85]$. 
\end{Example}

\subsection{Ap\'ery sets and semigroup polynomials}
 The {\it Ap\'ery set} of $S$ with respect to a nonzero $m\in S$ is defined as
$$\Ap(S;m)=\{s\in S \colon  s-m\not\in S\}.$$
Note that
\begin{equation}
\label{standar-basis}
S=\Ap(S;m)+m\mathbb N
\end{equation}
and that $\Ap(S;m)$ consists of a complete set of residues modulo $m$. Thus we
have
\begin{equation}
\label{oudedoos}
\mathrm H_S(x)=\sum_{w\in \Ap(S;m)}x^w\sum_{i=0}^{\infty}x^{mi}=\frac{1}{1-x^m}\sum_{w\in \Ap(S;m)}x^w,
\end{equation}
cf. \cite[(4)]{RR}.

Ap\'ery sets can also be defined in a natural way for integers $m$ not in the semigroup (see for instance \cite{fr} or \cite{gs-l}), but in this case $\#\Ap(S;m)\neq m$.

\begin{Prop}\label{card-apery}
Let $S$ be a numerical semigroup and $m$ be a positive integer. Then $\#\Ap(S;m)=m$ if and only if $m\in S$.
\end{Prop}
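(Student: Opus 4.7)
The plan is to prove the equivalence by analyzing how $\Ap(S;m)$ distributes across the residue classes modulo $m$. In both directions the key observation is that $0 \in \Ap(S;m)$ automatically, since $-m \notin S$ (as $S \subseteq \mathbb{N}$); this will be the source of the ``extra'' element in the converse.

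For the implication $m \in S \Rightarrow \#\Ap(S;m) = m$, which is the classical fact, I would show that $\Ap(S;m)$ contains exactly one element in each residue class $r \in \{0, 1, \ldots, m-1\}$ modulo $m$. \emph{Existence}: because $\mathbb{N} \setminus S$ is finite we have $\gcd(S) = 1$, so every residue class meets $S$; let $w_r$ be the smallest element of $S$ congruent to $r$ modulo $m$. Then $w_r - m \notin S$ (either because $w_r - m < 0$, or because $w_r - m \geq 0$ would contradict the minimality of $w_r$), hence $w_r \in \Ap(S;m)$. \emph{Uniqueness}: if $s < s'$ both lie in $\Ap(S;m)$ and $s \equiv s' \pmod{m}$, write $s' = s + km$ with $k \geq 1$. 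Then $s' - m = s + (k-1)m$ lies in $S$ since $s \in S$, $m \in S$ and $S$ is closed under addition, contradicting $s' \in \Ap(S;m)$. This is precisely the step where the hypothesis $m \in S$ enters essentially.

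For the converse $m \notin S \Rightarrow \#\Ap(S;m) \neq m$, I would show in fact that $\#\Ap(S;m) > m$. First, the existence part of the previous paragraph does not use $m \in S$, so each residue class still contributes at least one element of $\Ap(S;m)$. It therefore suffices to produce a second element in some class. Since $\mathbb{N} \setminus S$ is finite, the set of positive multiples of $m$ lying in $S$ is nonempty; let $s_0 = km$ be its minimum. The assumption $m \notin S$ forces $k \geq 2$, so $s_0 - m = (k-1)m$ is a smaller positive multiple of $m$ and, by minimality of $s_0$, lies outside $S$. Hence $s_0 \in \Ap(S;m)$. Since also $0 \in \Ap(S;m)$ with $0 \equiv s_0 \pmod{m}$ and $0 \neq s_0$, the residue class of $0$ contains at least two elements of $\Ap(S;m)$, giving $\#\Ap(S;m) \geq m+1$.

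The argument is conceptually routine and I do not foresee any substantial obstacle; the only point deserving care is to notice that the ``one element per residue class'' picture of Apéry sets can fail in two different ways, and that in the setting $m \notin S$ it is \emph{uniqueness} (not existence) that breaks, as witnessed concretely by the pair consisting of $0$ and the smallest positive multiple of $m$ in $S$.
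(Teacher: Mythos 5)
Your proof is correct and follows essentially the same route as the paper's: both arguments rest on the observation that the minimal elements $w_i$ of $S$ in each residue class modulo $m$ already give $\#\Ap(S;m)\ge m$, with equality precisely when these exhaust the Ap\'ery set. The only (cosmetic) difference is in the converse, where the paper deduces $m\in S$ directly from the equality $\Ap(S;m)=\{w_0,\ldots,w_{m-1}\}$, while you argue the contrapositive by exhibiting the smallest positive multiple of $m$ lying in $S$ as a second Ap\'ery element in the residue class of $0$; both are sound.
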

\begin{proof} 
For $i\in \{0,\ldots, m-1\}$ set $w_i=\min\{ s\in S \colon  s\equiv i \pmod m \}$. By definition, $w_0=0$ and $\{w_0,\ldots, w_{m-1}\}\subseteq \Ap(S;m)$. Hence $\#\Ap(S;m)\ge m$, and equality holds if and only if $\{w_0,w_1,\ldots, w_{m-1}\}=\Ap(S;m)$.

If $m\in S$, \cite[Lemma 2.4]{roos} asserts that $\{w_0,w_1,\ldots, w_{m-1}\}=\Ap(S;m)$.

Now assume that $\{w_0,w_1,\ldots, w_{m-1}\}=\Ap(S;m)$. Then, for every $i\in \{0,\ldots,m-1\}$ and every $k\in \mathbb N$, $w_i+km\in S$. In particular $w_0+m=m\in S$.
\end{proof}

\begin{Example}
Let $S$ be a numerical semigroup minimally generated by $\{a,b\}$. Assume that $u,v$ are integers with $0\le u<b$ and $1=ua+vb$. By Lemma \ref{coeff-PS}, the number of ones in $\mathrm P_S$ equals $\#\Ap(S;1)$ and, in view of \cite[Theorem 14]{fr}, we have that $\#\Ap(S;1)=u(a+v)$ (compare with \cite[Corollary 1]{Moree}). Given $0<\gamma<1/2$, let $\mathrm C_{\gamma}(x)$ 
denote the number of numerical semigroups $S=\langle p,q\rangle$ with $p,q$ primes and $m=pq\le x$ such that
$\#\Ap(\langle p,q \rangle;1)\le m^{1/2+\gamma}$. Bzd{\k{e}}ga \cite{Bartek} was the first to 
obtain sharp upper and lower bounds for this quantity.
Fouvry \cite{Fouvry}, using deep methods from analytic number theory, even obtained an asymptotic for
$\mathrm C_{\gamma}(x)$ in the range $\gamma\in \left(\frac{12}{25},\frac{1}{2}\right)$.
\end{Example}

\begin{Example}\label{sym1} 
Let $m$ and $q$ be positive integers such that $m\ge 2q+3$ and let 
$$S=\langle m,m+1,qm+2q+2,\ldots,qm+(m-1)\rangle.$$ Then by \cite[Lemma 4.22]{roos} $S$ is symmetric with multiplicity
$m$ and embedding dimension $m-2q$. It is easy to deduce that $\Ap(S;m)  =  \{0,m+1,2m+2,\ldots,qm+q,qm+2q+2,\ldots,qm+(m-1),(q+1)(m+1), \ldots,(2q+1)(m+1)\}$.
On invoking \eqref{oudedoos} and carrying out the computations, we obtain an explicit formula for the semigroup polynomial:
$$\mathrm P_S(x)=\sum_{k=0}^q x^{km}+x^{q(m+2)+2}\sum_{k=0}^{q+1}x^{km}-x\sum_{k=0}^{2q+1}x^{k(m+1)}.$$


\end{Example}

%
%

\section{Conjecture \ref{conjecture} holds for embedding dimension $\le 3$}\label{conj+sym}
Using the fact that every symmetric numerical semigroup $ S $ with embedding dimension $\mathrm e(S)\le 3$ is a complete intersection 
(\cite{Herzog}), it is easy to see that the following result
holds.
\begin{Lem}
\label{zeven}
For all numerical semigroups $ S $ with $\mathrm e(S)\le 3$, we have
$$\text{complete intersection}\Leftrightarrow\text{cyclotomic}\Leftrightarrow \text{symmetric}.$$
\end{Lem}

\begin{Example}
Let $S=\langle 4,6,9\rangle$. We find
that $S$ is symmetric and hence it must be cyclotomic. Indeed, we
have $\mathrm P_S(x)=x^{12}-x^{11}+x^8-x^7+x^6-x^5+x^4-x+1= \Phi_6(x)\Phi_{12}(x)\Phi_{18}(x)$.
\end{Example}

\begin{cor}
Conjecture \ref{conjecture} holds true for all numerical semigroups $ S $ such that $ \mathrm{e}(S)\le 3. $ 
\end{cor}
The analogous version of Lemma \ref{zeven} is not true if $\mathrm e(S)=4$ as shown, for instance, by the numerical semigroup $ S=\langle 6,7,10,11\rangle $ that is obtained by setting $ m=6 $ and $ q=1 $ in Example \ref{sym1}. The semigroup polynomial then equals $\mathrm P_S(x)=x^{16}-x^{15}+x^{10}-x^8+x^6-x+1$, which is not Kronecker. Further, we suspect that the numerical semigroups described in Example \ref{sym1} are not cyclotomic for embedding dimension $ \ge 4. $ We did an exhaustive search in this family of numerical semigroups up to multiplicity 30, and indeed, only those with embedding dimension three were cyclotomic.   
\par It turns out that the smallest Frobenius number that can occur for
a symmetric numerical semigroup that is not cyclotomic is $9$. There is
only one such semigroup, namely 
$S=\langle  5, 6, 7, 8 \rangle$, where we have $\mathrm P_S(x)=x^{10}-x^9+x^5-x+1$. For Frobenius number 11, we have two symmetric numerical semigroups that are not cyclotomic:  $\langle 5, 7, 8, 9 \rangle$ and $\langle 6, 7, 8, 9, 10\rangle$. (Recall that a symmetric numerical semigroup has an odd Frobenius number, see, for instance, \cite{roos}.)  
\begin{Problem}
Prove that the numerical semigroups $ S $ given in Example 
\ref{sym1} for which $\mathrm e(S)\ge 4$ are not cyclotomic, or find a counterexample.
\end{Problem}

\section{On the factorization of $\mathrm P_S(x)$ into irreducibles}
In this section we consider a cyclotomic numerical semigroup
$S$ and try to infer some restrictions on the possible factorizations
of $\mathrm P_S(x)$ into cyclotomic polynomials. Lemma \ref{lem-basaal} 
is obtained on substituting $x=1$, and Lemma \ref{newbe} on substituting
$x=-1$ in (\ref{basaal}).
\begin{Lem}\label{lem-basaal}
\label{begin}
Let  $S$ be cyclotomic and ${\mathcal D}$ be as in Definition \ref{cy}.
If $d\in {\mathcal D}$, then $d>1$ and $d$ is not a prime power.
\end{Lem}
\begin{proof} 
By \eqref{psx} and \eqref{basaal} we have $1=\mathrm P_S(1)=\prod_{d\in {\mathcal D}}\Phi_d(1)^{e_d},$
and hence $e_1=0$. The proof is completed
on using that $\Phi_d(1)\not\in \{-1,1\}$ for those $d$ that 
are prime powers (see (\ref{pn1})). 
\end{proof}
Recall that
$\mathrm P_S(-1)=1+2\rm g(S)-4\sum_{s\not\in S,~2\mid s}1$. This implies in particular that $ \mathrm P_S(-1) $ is odd.
\begin{Lem}
\label{newbe}
Let $S$ be a cyclotomic numerical semigroup and $p>2$ a prime. Then
$$p\mid \mathrm  P_S(-1)\Leftrightarrow 
\Phi_{2p^k}(x)\mid\mathrm P_S(x)$$ 
for some $k\ge 1.$
\end{Lem}
\begin{proof}
"$\Leftarrow$". The assumption 
$\Phi_{2p^k}(x)\mid \mathrm P_S(x)$ implies that $\Phi_{2p^k}(-1)\mid\mathrm P_S(-1)$.  
Now invoke (\ref{pn2}) and (\ref{atmineen}).\\
"$\Rightarrow$". We must have $p\mid\Phi_n(-1)$ for some $n$ and
$\Phi_n(x)\mid\mathrm P_S(x)$. 
By Lemma \ref{lem-basaal} we must have $n>2$ (in fact $n\ge 6$) and
$n$ is not a power of two.
By (\ref{pn2}) it now follows that $n=2p^k$ for some $k\ge 1.$ 
\end{proof}

\begin{Example}
Let $S=\langle 6,9,11\rangle$. Then 
\[\mathrm P_S(x)= x^{26}-x^{25}+x^{20}-x^{19}+x^{17}-x^{16}+x^{15}-x^{13}+x^{11}-x^{10}+x^9-x^7+x^6-x+1 =\Phi_{18}(x)\Phi_{33}(x).
\] 
Observe that $18=2\cdot 3^2$ and 
$\mathrm P_S(-1)=3$.
\end{Example}
\begin{Remark}
If $p$ divides $\mathrm P_S(-1)$ exactly, then there is a unique positive integer
$k$ such that $\Phi_{2p^k}(x)$ divides $\mathrm P_S(x)$ exactly.
\end{Remark}

\section{Gluings of numerical semigroups}
\label{glueglue}

In this section we will use gluings to infer that every complete intersection
numerical semigroup is cyclotomic and hence Lemma \ref{ciisc}.

Let $T$, $T_1$ and $T_2$ be submonoids of $\mathbb N$. We say that $T$ is the \textit{gluing} of $T_1$ and $T_2$ if 
\begin{enumerate}
\item $T=T_1+T_2$;
\item $\mathrm{lcm}(d_1,d_2)\in T_1\cap T_2$, with $d_i=\gcd(T_i)$ for $i\in \{1,2\}$,
\end{enumerate}
and we will write $T=T_1+_d T_2$, with $d=\mathrm{lcm}(d_1,d_2)$. 

Every nontrival submonoid $T$ of $\mathbb N$ is isomorphic as a monoid to $T/\gcd(T)$, which is a numerical semigroup. Hence, in the above definition if $T=S$ is a numerical semigroup, and $S=T_1+_d T_2$, then $T_i=d_iS_i$, with $S_i=T_i/d_i$, and $\gcd(d_1,d_2)=\gcd(S)=1$. Hence $\mathrm{lcm}(d_1,d_2)=d_1d_2$, which leads to $d_i\in S_j$ for $\{i,j\}=\{1,2\}$.


For $a_1,a_2$ integers greater than 2 with $\gcd(a_1,a_2)=1$, it is shown in \cite{gluings-fv-hs} that 
\begin{equation}\label{hil-gluing}
\mathrm H_{a_1S_1+_{a_1a_2} a_2S_2}(x)=(1-x^{a_1a_2})\mathrm H_{S_1}(x^{a_1})\mathrm H_{S_2}(x^{a_2}).
\end{equation} 
For the particular case $S=\langle a_1,a_2\rangle=a_1\mathbb N+_{a_1a_2} a_2\mathbb N$, we obtain (see also \cite{Moree})
\begin{equation}\label{hil-embd-2}
\mathrm H_{\langle a_1,a_2\rangle}(x)=\frac{1-x^{a_1a_2}}{(1-x^{a_1})(1-x^{a_2})},
\end{equation}
and by using \eqref{uitelkaar}, we get
\begin{equation} \label{Pa1a2}
\mathrm P_{\langle a_1,a_2\rangle}(x)=\frac{(1-x)(1-x^{a_1a_2})}{(1-x^{a_1})(1-x^{a_2})}= \prod_{d\mid a_1a_2,~d\nmid a_1,~d\nmid a_2}\Phi_d(x). 
\end{equation}
Hence
\begin{equation}\label{pol-gluing}
\mathrm P_{a_1S_1+_{a_1a_2} a_2S_2}(x)=\frac{(1-x)(1-x^{a_1a_2})}{(1-x^{a_1})(1-x^{a_2})}\mathrm P_{S_1}(x^{a_1})\mathrm P_{S_2}(x^{a_2})=\mathrm P_{\langle a_1,a_2\rangle}(x) \mathrm P_{S_1}(x^{a_1})\mathrm P_{S_2}(x^{a_2}).
\end{equation}

Delorme in \cite{Delorme} proved (but with a different notation) that a numerical semigroup $S$ is a complete intersection if and only if $S$ is either $\mathbb N$ or the gluing of two complete intersection numerical semigroups. If we proceed recursively and $A=\{a_1,\ldots, a_t\}$ is a minimal generating system of $S$, we will find positive integers $g_1,\ldots,g_{t-1}$ such that 
\[S=a_1\mathbb N+_{g_1} a_2\mathbb N+\cdots +_{g_{t-1}} a_t\mathbb N.\]
By using \cite[Theorem 20]{gluings-fv-hs}, we obtain
\begin{equation}\label{pol-ci}
\mathrm P_S(x) =(1-x) {\prod_{i=1}^{t-1}(1- x^{g_i})}\prod_{i=1}^t(1- x^{a_i})^{-1},
\end{equation}
and we deduce that
every complete intersection numerical semigroup is cyclotomic, and
hence we have proved Lemma \ref{ciisc}, that is, one of the directions of Conjecture \ref{conjecture}. 
\par For $S=\langle n_1,\ldots,n_e\rangle$, according to \cite[(1)]{s-w}, the only nonzero terms of $\mathcal K(x)=\mathrm H_S(x)\prod_{i=1}^e (1-x^{n_i})$ are those of degrees $n\in S$ such that the \textit{Euler characteristic} of the \textit{shaded set} of $n$, $\Delta_n = \{ L\subset \{n_1,\ldots, n_e\} \colon n-\sum_{s\in L} s\in S\}$, is not zero, that is, $\chi_S(n):=\sum_{L\in \Delta_n}(-1)^{\# L}\neq 0$. We have been trying to determine whether $\mathcal K(x)$ factors as $\prod_{b\in \mathrm{Betti}(S)}(1-x^b)^{m_b}$, where $\mathrm{Betti}(S)$ is the set of the \textit{Betti numbers} of $ S $, i.e., the elements $ n\in S $ for which the underlying graph of $\Delta_n$ is not connected (the graph whose vertices are the elements $n_i\in\{n_1,\ldots,n_e\}$ such that $n-n_i\in S$, and $n_in_j$ is an edge whenever $i,j\in \{1,\ldots, e\}$, $i\neq j$ and $n-(n_i+n_j)\in S$; see \cite[\S 7.3]{roos}) and $m_b\in\mathbb{N}$. This is what actually happens in \eqref{pol-ci}. We will detail our efforts done in this regard in Section \ref{Beatrice}, after introducing some further tools.

\subsection{Free semigroups}\label{free-semigroups}

Let $S$ be a numerical semigroup generated by $\{n_1,\ldots,n_t\}$. We say that $S$ is \emph{free} if either $S=\mathbb N$ or it is the gluing of the free semigroup $\langle n_1,\ldots, n_{t-1}\rangle$ and $\langle n_t\rangle$ (see \cite{b-c}). The way we enumerate the generators is relevant. For instance $S$ is free for the arrangement $\{n_1=4,n_2=6,n_3=9\}$, but not for $\{n_1=4, n_2=9, n_3=6\}$.

\begin{Example}\label{ex-f-sem}
Let $S$ be an embedding dimension three symmetric numerical semigroup. Then $S$ is free and it has a system of generators of the form $\langle am_1,am_2,b m_1 + cm_2\rangle$, with $a,b,c\in \mathbb N$ such that $a\ge 2$, $b+c\ge 2$ and $\gcd(a,bm_1+cm_2)=1$ (\cite[Theorem 10.6]{roos}). It follows that $S$ can be expressed as $S=a\langle m_1,m_2\rangle +_{a(bm_1+cm_2)} (bm_1+cm_2)\mathbb N$. From \eqref{pol-ci} we get 
\[\mathrm P_S(x)= \frac{(1-x)(1-x^{a(bm_1+cm_2)})(1-x^{am_1m_2})}{(1-x^{am_1})(1-x^{am_2})(1-x^{bm_1+cm_2})}.\]
If $S=\langle n_1,n_2,n_3\rangle$ is nonsymmetric with embedding dimension three, then it can be deduced from \cite{s-w} and \cite{Betti} (see also \cite[Theorem 4]{RR}) that 
\[\mathrm P_S(x) =\frac{(1-x)(1-x^{c_1n_1}-x^{c_2n_2}-x^{c_3n_3}+x^{f_1+n_1+n_2+n_3}+x^{f_2+n_1+n_2+n_3})}{(1-x^{n_1})(1-x^{n_2})(1-x^{n_3})},\]
where 
\begin{itemize}
\item $c_i=\min\{m \in \mathbb N\setminus\{0\} \colon mn_i\in \langle n_j,n_k\rangle \}$ for all $\{i,j,k\}=\{1,2,3\}$,
\item $f_1=\mathrm F(S)$ and $f_2\neq f_1$ is such that $f_2+S\setminus \{0\}\subset S$ ($f_1$ and $f_2$ are the \textit{pseudo-Frobenius numbers} of $S$; their expression can be found for instance in \cite[Corollary 11]{Betti}).
\end{itemize}
Formulas for symmetric and pseudo-symmetric embedding dimension four can be derived from \cite[Section 4]{Betti}, and the number of nonzero coefficients of $\mathrm H_S(x)\prod_{i=1}^4(1-x^n_i)$ is 12 and 14, respectively. (Recall that $S$ is \textit{pseudo-symmetric} if $\mathrm F(S)$ is even and for every $x\in\mathbb Z\setminus S$, either $x=\mathrm F(S)/2$ or $\mathrm F(S)-x\in S$.) {}From \cite{s-w} it follows that the number of nonzero coefficients 
of $\mathrm H_S(x)\prod_{i=1}^4(1-x^n_i)$ is not bounded when $S$ ranges
over all numerical semigroups of embedding dimension $4$.
\end{Example}
Special families of free numerical semigroups are the \textit{telescopic} ones (free with respect to the arrangement $n_1<n_2<\cdots <n_t$, \cite{telescopic}), 
numerical semigroups associated to irreducible plane curve singularities (\cite{zar}) and \textit{binomial semigroups} (they will be considered in Example \ref{cee+}).

Let $n\ge 2$ and  $(a_1,a_2,\ldots,a_n)$ be a sequence of relatively prime positive integers. For
every $k\in \{1,\ldots,n\}$, let $d_k=\gcd(a_1,\ldots,a_k)$. 
For $k\in\{2,\ldots,n\}$, let $c_k=d_{k-1}/d_k$.
Let $S_k$ be the semigroup generated by $a_1,\ldots,a_k$. We say that the
sequence $(a_1,a_2,\ldots,a_n)$ is \emph{smooth} if $c_ka_k\in S_{k-1}$ for every 
$k=2,\ldots,n$.

Observe that a numerical semigroup $S$ is generated by a smooth sequence if and only if $S$ is free. Also $c_k a_k\in S_{k-1}$ is equivalent to $\frac{a_k}{d_k}\in \frac{1}{d_{k-1}}S_{k-1}$ (and $\frac{1}{d_{k-1}}S_{k-1}$ is a numerical semigroup). 
Notice that $S_k=S_{k-1}+a_k\mathbb N$. With the notation of gluing, we have $\frac{1}{d_k}S_k=c_k\left(\frac{1}{d_{k-1}}S_{k-1}\right)+_{c_k\frac{a_k}{d_k}} \frac{a_k}{d_k}\mathbb N$.  By using \eqref{pol-ci}, we recover the following result.

\begin{Lem}[{Leher \cite[Corollary 8]{Leher}}] 
\label{duvel}
Let $n\ge 2$ and  $(a_1,a_2,\ldots,a_n)$ be a smooth sequence. Let $S$ be the numerical semigroup generated by $a_1,\ldots,a_n$.
We have
$$\mathrm P_S(x)=(1-x)\prod_{i=2}^n (1-x^{c_ia_i})\prod_{i=1}^n(1-x^{a_i})^{-1},$$
which factorizes as
\begin{equation}
\label{beeplus}
\mathrm P_S=\Phi_1 \prod_{d\mid a_1}\Phi_d^{-1}\prod_{i=2}^n\prod_{d\mid c_ia_i,~d\nmid a_i}\Phi_d.
\end{equation}
\end{Lem}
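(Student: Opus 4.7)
The plan is to exploit the fact, already made explicit in the discussion preceding the lemma, that a smooth sequence realizes $S$ as an iterated gluing, and then to feed the gluing numbers into the general complete-intersection formula \eqref{pol-ci}. The second equality \eqref{beeplus} is then a bookkeeping exercise using the cyclotomic factorization \eqref{uitelkaar} of $x^m-1$.

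First I would verify that at each step $k\ge 2$ the equality $S_k=S_{k-1}+a_k\mathbb N$ really is a gluing in the sense of Section \ref{glueglue}. Condition (1) holds by construction; for condition (2), $\gcd(S_{k-1})=d_{k-1}$ and $\gcd(a_k\mathbb N)=a_k$, so $\mathrm{lcm}(d_{k-1},a_k)=d_{k-1}a_k/d_k=c_ka_k$. This number lies in $a_k\mathbb N$ trivially, and in $S_{k-1}$ precisely by the smoothness hypothesis $c_ka_k\in S_{k-1}$. Hence $S_k=S_{k-1}+_{c_ka_k}a_k\mathbb N$, and iterating yields
\[
S=a_1\mathbb N+_{c_2a_2}a_2\mathbb N+_{c_3a_3}\cdots+_{c_na_n}a_n\mathbb N.
\]
Applying \eqref{pol-ci} with $g_{k-1}=c_ka_k$ for $k=2,\ldots,n$ produces the first displayed identity at once.

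For \eqref{beeplus} I would substitute $1-x^m=-\prod_{d\mid m}\Phi_d(x)$ (which follows from \eqref{uitelkaar} together with $\Phi_1(x)=x-1$) into every factor of the product. The accumulated sign is
\[
(-1)^{1+(n-1)+n}=(-1)^{2n}=1,
\]
so no sign survives. Since $a_i\mid c_ia_i$, for each $i\ge 2$ the factor $\prod_{d\mid a_i}\Phi_d(x)$ in the denominator cancels against the corresponding divisors in the numerator factor $\prod_{d\mid c_ia_i}\Phi_d(x)$, leaving $\prod_{d\mid c_ia_i,\ d\nmid a_i}\Phi_d(x)$. The only uncancelled pieces are $\Phi_1$ (from $1-x$) and $\prod_{d\mid a_1}\Phi_d^{-1}$ (from the $i=1$ denominator factor), which together yield exactly \eqref{beeplus}.

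I do not anticipate a real obstacle: the substantive work, namely identifying the gluing data $g_{k-1}=c_ka_k$ and the polynomial identity \eqref{pol-ci} for iterated gluings, is already in place. The only thing to be careful about is the sign bookkeeping when converting $(1-x^m)$ factors into cyclotomic factors, and the cancellation of the divisors of $a_i$ inside the divisors of $c_ia_i$; both are routine.
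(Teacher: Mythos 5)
Your proposal is correct and follows exactly the paper's route: the paragraph preceding the lemma identifies the smooth sequence with an iterated gluing (with gluing data $c_ka_k$), and the paper then simply invokes \eqref{pol-ci}, exactly as you do; your sign count and the cancellation of $\prod_{d\mid a_i}\Phi_d$ inside $\prod_{d\mid c_ia_i}\Phi_d$ for $i\ge 2$ correctly yield \eqref{beeplus}.
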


\begin{cor} \label{duvel2} 
 Let $S$ be the numerical semigroup generated by the smooth sequence $(a_1,\ldots,a_n)$
 with $n\ge 2$. Then
\begin{enumerate}[a)]
\item $\mathrm F(S)=\sum_{i=2}^n c_ia_i-\sum_{i=1}^{n}a_i$ (this formula can also be derived from \cite{Delorme} or \cite{H-K}).
\item $S$ is symmetric.
\item $S$ is cyclotomic.
\end{enumerate}
\end{cor}

\begin{Example}[Binomial semigroups]\label{cee+}
Consider $B_m(a,b):=\langle a^m,ba^{m-1},\ldots,b^{m-1}a,b^m\rangle$, where $a,b>1$ are
relatively prime. Putting $a_k=a^{m-k}b^k$, $k\in\{0,\ldots,m\}$, we see that the sequence
$(a_0,\ldots, a_m)$ is smooth (with $c_k=a$ for $k\in\{1,\ldots,m\}$ and $c_ka_k=ba_{k-1}\in \langle a_0,\ldots,a_{k-1}\rangle$).
By Corollary \ref{duvel2} it follows that
$$\mathrm F(B_m(a,b))=\sum_{k=1}^m a^{m+1-k}b^k-\sum_{k=0}^m a^{m-k}b^k.$$
Further, we have
$$\mathrm P_{B_n(a,b)}(x)=(1-x)\prod_{k=1}^m (1-x^{a^{m+1-k}b^k})\prod_{k=0}^m (1-x^{a^{m-k}b^k})^{-1}.$$
In particular, let $ B=B_n(p,q) $ be a binomial numerical semigroup with $ p $ and $ q $ different primes. From \eqref{beeplus} we infer that
\begin{equation}
\label{depths}
\mathrm P_B  =  \Phi_1 (\Phi_1\Phi_p \cdots \Phi_{p^n})^{-1}\prod_{k=1}^n \prod_{j=0}^k \Phi_{p^{n+1-k}q^j}
 =  \prod_{k=1}^n \prod_{j=1}^k \Phi_{p^{n+1-k}q^j}=\prod_{l=2}^{n+1}\prod_{\substack{i+j=l\\1\le i,j\le l}}\Phi_{p^iq^j}.
\end{equation}

\end{Example}

\section{Cyclotomic exponents and a first step in proving the conjecture }\label{cyclo}
The reader might wonder whether the expression in the right-hand side of (\ref{pol-ci}) is unique. It
is easy to see the answer is yes and indeed a little more can be shown, see Moree \cite[Lemma 1]{M2000}.
\begin{Lem} 
Let $f(x)=1+a_1x+\dots+a_dx^d\in \mathbb Z[x]$ be a polynomial of degree $d$ (hence $a_d\ne 0$).
Let $\alpha_1,\ldots,\alpha_d$ be its roots. Put $s_f(k)=\alpha_1^{-k}+\dots+\alpha_d^{-k}$. Then
the numbers $s_f(k)$ are integers and satisfy the recursion
$$s_f(k)+a_1s_f(k-1)+\dots+a_{k-1}s_f(1)+ka_k=0,$$
with $a_k=0$ for every $k>d$. Put 
$$b_f(k)=\frac{1}{k}\sum_{d\mid k}s_f(d)\mu\left(\frac{k}{d}\right).$$
Then $b_f(k)$ is an integer. Moreover, we have the formal identity
$$1+a_1x+\dots+a_dx^d=\prod_{j=1}^{\infty}(1-x^j)^{b_f(j)}.$$
\end{Lem}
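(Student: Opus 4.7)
The plan is to address the three claims of the lemma in order: the recursion and integrality of $s_f(k)$, the derivation of the product identity together with the explicit formula for $b_f(k)$, and finally the integrality of $b_f(k)$, which I expect to be the main obstacle.

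First I would introduce the reciprocal polynomial $\tilde f(y):= y^d f(1/y)=y^d+a_1y^{d-1}+\cdots+a_{d-1}y+a_d$. This is monic of degree $d$ with integer coefficients (well-defined because $f(0)=1\neq 0$ forces $\alpha_j\neq 0$), and its roots are precisely the reciprocals $\beta_j:=1/\alpha_j$. Hence the $\beta_j$ are algebraic integers with elementary symmetric functions $e_k(\beta)=(-1)^k a_k$. Since $s_f(k)$ is by definition the $k$-th power sum $p_k(\beta)$, Newton's identities
\[
p_k-e_1p_{k-1}+e_2p_{k-2}-\cdots+(-1)^{k-1}e_{k-1}p_1+(-1)^k k e_k=0
\]
(with $e_k=0$ for $k>d$) yield, after substituting $e_k=(-1)^k a_k$, exactly the stated recursion; integrality of $s_f(k)$ then follows by induction from $s_f(1)=-a_1\in\mathbb Z$.

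Next, matching constant and leading coefficients one verifies that $f(x)=\prod_{j=1}^d(1-\beta_j x)$ as a polynomial identity. Taking formal logarithms gives
\[
-\log f(x)=\sum_j\sum_{k\ge1}\frac{\beta_j^k}{k}x^k=\sum_{k\ge1}\frac{s_f(k)}{k}x^k.
\]
Since $f\in 1+x\mathbb Q[[x]]$, rational exponents $b_f(j)$ can be determined inductively so that $f(x)=\prod_{j\ge1}(1-x^j)^{b_f(j)}$ as an identity in $\mathbb Q[[x]]$, and for any such factorization
\[
-\log\prod_{j\ge1}(1-x^j)^{b_f(j)}=\sum_{j\ge1}b_f(j)\sum_{k\ge1}\frac{x^{jk}}{k}=\sum_{n\ge1}\frac{x^n}{n}\sum_{j\mid n}j\,b_f(j).
\]
Matching coefficients of $x^n$ in the two expressions for $-\log f(x)$ gives $s_f(n)=\sum_{j\mid n}j\,b_f(j)$, and M\"obius inversion delivers the stated formula $b_f(n)=\frac{1}{n}\sum_{d\mid n}\mu(n/d)s_f(d)$ while simultaneously establishing the formal product identity.

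The hard part is showing $b_f(k)\in\mathbb Z$, equivalently the Gauss-type congruence $\sum_{d\mid k}\mu(k/d)s_f(d)\equiv 0\pmod k$. My approach would be to let $M$ be the companion matrix of $\tilde f$, an integer $d\times d$ matrix whose eigenvalues are the $\beta_j$, so that $s_f(k)=\mathrm{tr}(M^k)$. The desired congruence is then a special case of the classical statement that $\sum_{d\mid k}\mu(k/d)\mathrm{tr}(M^d)\equiv0\pmod k$ for every integer square matrix $M$. This in turn is proved prime by prime: for each prime power $p^a\|k$, one shows $\mathrm{tr}(M^{p^a m})\equiv\mathrm{tr}(M^{p^{a-1}m})\pmod{p^a}$. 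The case $a=1$ reduces to $\mathrm{tr}(A^p)\equiv\mathrm{tr}(A)\pmod p$, itself a consequence of Freshman's dream applied to the eigenvalues of $A$ in the ring of algebraic integers together with Fermat's little theorem for the ordinary integer $\mathrm{tr}(A)$; the general case follows by a $p$-adic induction on $a$. Applying this to our $M$ delivers $b_f(k)\in\mathbb Z$ and completes the proof.
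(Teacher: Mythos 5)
The paper does not actually prove this lemma; it is quoted from Moree \cite[Lemma 1]{M2000}, so there is no internal proof to compare against. On its own merits, your argument for the first two assertions is correct and standard: passing to the reciprocal polynomial $\tilde f$ identifies $s_f(k)$ with the power sums of the algebraic integers $\beta_j=1/\alpha_j$, Newton's identities with $e_k=(-1)^ka_k$ give exactly the stated recursion (hence integrality by induction), and the formal-logarithm computation $-\log f(x)=\sum_k s_f(k)x^k/k$ combined with $-\log\prod_j(1-x^j)^{b_j}=\sum_n \bigl(\sum_{j\mid n}jb_j\bigr)x^n/n$ yields $s_f(n)=\sum_{j\mid n}jb_f(j)$ and, by M\"obius inversion, both the formula for $b_f$ and the product identity.

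The weak point is the integrality of $b_f(k)$. Your reduction to the Gauss congruence $\sum_{d\mid k}\mu(k/d)\operatorname{tr}(M^d)\equiv 0\pmod k$ for the companion matrix $M$ of $\tilde f$ is legitimate, and that congruence is a true classical theorem; but the step you lean on for higher prime powers, namely $\operatorname{tr}(M^{p^am})\equiv\operatorname{tr}(M^{p^{a-1}m})\pmod{p^a}$, is not obtained by the ``$p$-adic induction'' you sketch: applying the $a=1$ case to $B=M^{p^{a-1}m}$ only gives the congruence modulo $p$, not modulo $p^a$. A correct proof of the higher-power case needs a genuinely different input, e.g.\ the orbit-counting argument on closed walks (the cyclic group $C_n$ acts on the weighted closed walks counted by $\operatorname{tr}(A^n)$, each orbit of exact period $d$ contributes a multiple of $d$, whence $\sum_{d\mid n}\mu(n/d)\operatorname{tr}(A^d)$ is the total weight of aperiodic walks and is divisible by $n$). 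Moreover, all of this machinery is avoidable: since $f\in 1+x\mathbb Z[x]$ and $(1-x^j)^{\pm 1}\in\mathbb Z[[x]]$, one can define the exponents inductively by noting that $f(x)\prod_{j<N}(1-x^j)^{-b_j}=1+c_Nx^N+\cdots$ with $c_N\in\mathbb Z$ and setting $b_N=-c_N$; this shows directly that the (unique) exponents are integers, and the M\"obius formula then identifies them with your $b_f(k)$. You should either supply the orbit-counting (or an equivalent) proof of the trace congruence, or switch to this elementary inductive argument.
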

It is a consequence of this lemma that
given a numerical semigroup $S$, there are unique
integers $\mathrm e_1,\mathrm e_2,\ldots$ such that
\begin{equation}
\label{cycident}
\mathrm P_S(x)=\prod_{j=1}^{\infty}(1-x^j)^{\mathrm e_j}.
\end{equation}
The sequence ${\bf e}=\{\mathrm e_1,\mathrm e_2,\ldots\}$ we call the {\it cyclotomic exponent sequence} of $S$.
\begin{Problem}
Relate the properties of $S$ to its cyclotomic exponent sequence.
\end{Problem}
By Lemma \ref{trieviaal} we have $\mathrm e_1=0$ if $S=\langle 1\rangle$ and $\mathrm e_1=1$ otherwise. 
\begin{Remark}
The identity (\ref{cycident}) only holds for $|x|<\delta<1$, where $\delta$ is 
postive and easily related
to the roots of $f,$ see \cite{M2000} for details.
\end{Remark}

\begin{Lem}\label{finitesequence}
A numerical semigroup $S$ has a cyclotomic exponent sequence
with finitely many nonzero terms if and only if $S$ is 
a cyclotomic numerical semigroup.
\end{Lem}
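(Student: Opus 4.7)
The plan is to connect the factorization $\mathrm P_S(x)=\prod_j(1-x^j)^{e_j}$ supplied by the preceding lemma with the factorization of $\mathrm P_S(x)$ into cyclotomic polynomials, using the dual identities $x^m-1=\prod_{d\mid m}\Phi_d(x)$ and $\Phi_n(x)=\prod_{d\mid n}(1-x^d)^{\mu(n/d)}$ from \eqref{uitelkaar} and \eqref{phi-signchange}. Uniqueness of the cyclotomic exponent sequence, already granted by the preceding lemma, will allow me to pass back and forth between the two factorizations without worry.

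For the forward implication, assume $S$ is cyclotomic. By Lemma \ref{begin}, $\mathrm P_S(x)=\prod_{d\in\mathcal D}\Phi_d(x)^{e_d}$ with $\mathcal D$ finite and each $e_d$ a positive integer. I would substitute \eqref{phi-signchange} for each factor $\Phi_d$ and collect exponents of each $1-x^k$: the result is a finite product $\mathrm P_S(x)=\prod_{k=1}^{N}(1-x^k)^{f_k}$ with integer exponents $f_k=\sum_{d\in\mathcal D,\,k\mid d}e_d\mu(d/k)$, all but finitely many zero (since only divisors of some element of $\mathcal D$ can occur). By the uniqueness stated in the previous lemma, this $(f_k)$ is the cyclotomic exponent sequence of $S$, so finitely many $e_j$ are nonzero.

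For the converse, assume the cyclotomic exponent sequence of $S$ has finite support $F$, so $\mathrm P_S(x)=\prod_{j\in F}(1-x^j)^{e_j}$ with $e_j\in\mathbb Z\setminus\{0\}$. I would split $F=F^+\sqcup F^-$ by sign and set $N(x)=\prod_{j\in F^+}(1-x^j)^{e_j}$ and $D(x)=\prod_{j\in F^-}(1-x^j)^{-e_j}$, so that $\mathrm P_S(x)\,D(x)=N(x)$ in $\mathbb Z[x]$. Every zero of $N$ is a root of unity by \eqref{uitelkaar}, and since $\mathrm P_S$ is a polynomial we have $D\mid N$ in $\mathbb Z[x]$; any zero of $\mathrm P_S$ therefore appears as a zero of $N$ that survives cancellation with $D$, hence is a root of unity. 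Because $\mathrm P_S$ is monic with integer coefficients and all its roots lie on the unit circle, $\mathrm P_S$ is a Kronecker polynomial, so $S$ is cyclotomic. No step here is a real obstacle; the only point requiring a moment's care is the divisibility $D\mid N$ in the converse, which is automatic from the fact that $\mathrm P_S$ is a polynomial, and which lets me identify the roots of $\mathrm P_S$ as a subset of the (cyclotomic) roots of $N$.
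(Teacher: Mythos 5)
Your proof is correct and follows essentially the same route as the paper's: the forward direction substitutes \eqref{phi-signchange} into the factorization from Lemma \ref{begin} and invokes uniqueness, while the converse observes that a finite product $\prod_j(1-x^j)^{e_j}$ can only vanish at roots of unity. Your explicit cancellation argument ($D\mid N$) merely fills in a detail the paper leaves implicit.
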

\begin{proof}
"$\Rightarrow$". We can write
$\mathrm P_S(x)=\prod_{j=1}^{k}(1-x^j)^{\mathrm e_j}$ for some $k,$ hence $\mathrm P_S(x)$ 
has only roots of unity as zeros and so $S$ is a cyclotomic numerical
semigroup.\\
"$\Leftarrow$". By Definition \ref{cy}, the fact that $\mathrm P_S(1)\ne 0$ and formula (\ref{phi-signchange}) we
infer that $\mathrm P_S(x)=\prod_{j=1}^{r}(1-x^j)^{f_j}$, with the $f_j$ integers. By the uniqueness of the
cyclotomic exponents it now follows that $f_j=\mathrm e_j$ and so $\mathrm e_j=0$ for all $j$ large enough. 
\end{proof}
Lemma \ref{kronie} makes the above result effective using the arithmetic function
$a(n)=\max\{m \colon\varphi(m)\le n\}$. Using the estimate $\varphi(n)\ge (n/3)^{2/3}$ (see, e.g., 
\cite{effective}) we
see that we can write $a(n)=\max\{1\le m\le 3n^{3/2} \colon \varphi(m)\le n\}$, giving an 
algorithm to determine $a(n)$.
\begin{Lem}
\label{kronie}
Let $S$ be a numerical semigroup. Put $r=a(\mathrm F(s)+1)$.
Then $S$ is cyclotomic if and only if
$\mathrm P_S(x)=\prod_{j=1}^{r}(1-x^j)^{\mathrm e_j}$, with
$\mathrm e_j$ the cyclotomic exponents of $\mathrm P_S$.
\end{Lem}
\begin{proof}  
By Lemma \ref{finitesequence} we have that $S$ is cyclotomic if and only if
 $\mathrm P_S(x)=\prod_{j=1}^{M}(1-x^j)^{\mathrm e_j}$ for some integer $M$, so that 
 $\mathrm e_j=0$ for all $j>M$. 
 Now
 $\Phi_M(x)$ divides $1-x^M$ and no $1-x^j$ with $1\le j<M,$ and so $\Phi_m(x)\mid\mathrm P_S(x)$. 
 If $M>a(F(s)+1)$ it would follow
 that $\varphi(M)>F(s)+1$, and thus the product would have degree $>\mathrm F(S)+1$, whereas
 $\mathrm P_S(x)$ has degree $\mathrm F(S)+1$. This contradiction shows that $M\le r$. \end{proof}

The proof of the latter lemma is easily adapted to
show the correctness of the following algorithm which determines whether a monic polynomial $f(x)\in \mathbb Z[x],$ with
$f(0)\ne 0,$ is Kronecker or not.
\begin{algorithm}
Let $f(x)\in \mathbb Z[x]$ with
$f(0)\ne 0$ be a monic polynomial of degree $d$.
We are going to write $f(x)$ as $h(x)\prod_{1\le d\le M}\Phi_d(x)^{g_d}$, with 
$h(x)$ coprime to the product $\prod_{1\le d\le M}\Phi_d(x)^{g_d}$ and $M\le a(d)$. 
We determine
the $\gcd$ of $f(x)$ and $\Phi_1(x)$. As long as this $\gcd$ is $\Phi_1(x)$ we divide
$\Phi_1(x)$ out and continue until the $\gcd$ is $1$. 
We keep track of the number of divisions and in this way we have determined $g_1$.
We proceed with taking the $\gcd$ with $\Phi_2(x)$ and repeat
the process. In this way we determine the $\prod_{1\le d\le M}\Phi_d(x)^{g_d}$
and $h(x)$. Then $f(x)$ is Kronecker if and only if $h(x)=1$.
\end{algorithm}
\subsection{Cyclotomic exponents and Betti numbers}\label{Beatrice}
Write $ S=\langle n_1,n_2,\ldots,n_e\rangle $, with $e=\mathrm e(S)$ and $0<n_1<\cdots<n_e$. Note that \[(1-x^{n_1})(1-x^{n_2})\cdots(1-x^{n_e})=\sum_{j_1=0}^{1}
\sum_{j_2=0}^{1}\cdots\sum_{j_e=0}^{1}(-1)^{j_1+j_2+\cdots+j_e}x^{j_1n_1+j_2n_2+\cdots+j_en_e}.\]
We can thus write 
\begin{equation}
\label{looong}
\mathrm{P}_S(x)=\frac{1-x}{(1-x^{n_1})\cdots(1-x^{n_e})}\left(  \sum_{j_1=0}^{1}\sum_{j_2=0}^{1}\cdots\sum_{j_e=0}^{1}(-1)^{j_1+j_2+\cdots+j_e}x^{j_1n_1+j_2a_2+\cdots+j_en_e+S}\right),
\end{equation}
where $m+S:=\left\lbrace m+s: s\in S\right\rbrace$.
On recalling the definition of $ \chi_S(n) $ given in Section \ref{glueglue}, we can rewrite \eqref{looong} as 
\begin{equation}
\label{polychi}
\mathrm P_S(x)=\frac{1-x}{(1-x^{n_1})\cdots(1-x^{n_e})}\sum_n \chi_S(n)x^n.
\end{equation}
Note that $ \sum_n\chi_S(n)x^n $ is a polynomial since every 
$n>\mathrm F(S)+n_1+\cdots+n_e$ can be written as $ \sum a_in_i $ with $a_i\ge 1$ for $ 1\le i\le e $ and hence $\chi_S(n)=0$; this recovers the formula given in \cite{s-w}. Alternatively, this can be seen by noting that $\sum_n \chi_S(n)x^n$ is the product
of the polynomials $\mathrm P_S(x)$ and $(1-x^{n_1})\cdots(1-x^{n_e})/(1-x)$.
\par As a first step in proving Conjecture \ref{conjecture}, the following can be shown. Let $\mu=\min\left\lbrace n>1 : \chi_S(n)\ne 0\right\rbrace$ and let ${\mathfrak d}(n)$ be the \textit{denumerant} of $n$, that is, 
$${\mathfrak d}(n)=\#\left\{(a_1,\ldots,a_e)\in {\mathbb N}^e:\sum a_in_i=n\right\}.$$ 
\begin{Lem}
\label{ceva}
Let $S=\langle n_1,\ldots,n_e \rangle$ be a minimally generated cyclotomic numerical semigroup such that $ n_1<\cdots<n_e $ and let ${\bf e}=\{\mathrm e_1,\mathrm e_2,\ldots\}$ be its cyclotomic exponent sequence.
\begin{enumerate}[a)]
\item We have $\mathrm e_1=1 $. If $ \mu>n_e $, then $ \mathrm e_{n_1}=\mathrm e_{n_2}=\cdots =\mathrm e_{n_e}=-1 $. Further $ \mathrm e_{\mu}=-\chi_S(\mu) $. If $ 1\leq j\leq \mu $ and $ j\notin\left\lbrace 1,n_1,\ldots,n_e,\mu\right\rbrace  $, then $\mathrm e_j=0$.
\item If, in addition, there is some $s\in S$, $s\leq n_e$ with $\mathfrak d(s)\geq 2$, then $\mathrm e_j\geq 0$ for all $j>n_e$.
\end{enumerate}
\end{Lem}
\begin{proof}$~$\\
a) Since $\chi_S(0)=1$ we have $\sum_n \chi_S(n)x^n=1+\chi_S(\mu)x^{\mu}+\cdots$ and the result follows.\\
b) If there exist both positive $\mathrm e_j$ and negative $\mathrm e_i$ with $i, j>n_e,$ then we can write $$\mathrm P_S(x)=\frac{1-x}{\prod_{i=1}^e (1-x^{n_i})}\cdot \frac{(1-x^{j_1})^{\mathrm e_{j_1}}(1-x^{j_2})^{\mathrm e_{j_2}}\cdots}{(1-x^{i_1})^{\mathrm e_{i_1}}(1-x^{i_2})^{\mathrm e_{i_2}}\cdots},$$ where both products in the numerator and denominator are finite, $n_e<j_1<j_2<\cdots,$ $n_e<i_1<i_2<\cdots,$ and $\mathrm e_{i_k},\mathrm e_{j_l}> 0$.  Now, $(1-x^{i_1})^{-\mathrm e_{i_1}}(1-x^{i_2})^{-\mathrm e_{i_2}}\cdots=(1+x^{i_1}+x^{2i_1}+\cdots)^{\mathrm e_{i_1}}\cdots=1+\beta x^b+O(x^{b+1})$ is an infinite sum, with $b=i_1>n_e,\beta= \mathrm e_{i_1}$, and $(1-x^{j_1})^{\mathrm e_{j_1}}(1-x^{j_2})^{\mathrm e_{j_2}}\cdots =1-\alpha x^a+O(x^{a+1})$ is a finite sum, with $a=j_1>n_e,\alpha= \mathrm e_{j_1}$. Hence, using the easy fact that
\begin{equation}
\label{airways}
\prod_{i=1}^e (1-x^{n_i})^{-1}=\prod_{i=1}^e\sum_{j=0}^{\infty}x^{jn_i}=\sum_{s\in S}\mathfrak d(s)x^s, 
\end{equation} we get
\begin{equation}\label{eq}
\mathrm H_S(x)=\left( \sum_{s\in S}\mathfrak d(s)x^s\right) (1-\alpha x^a+O(x^{a+1}))(1+\beta x^b+O(x^{b+1})).  
\end{equation} Note that $a\neq b,$ hence we distinguish two cases:\\
\noindent{\bf Case 1:} $a>b$. Then $(1-\alpha x^a+O(x^{a+1}))(1+\beta x^b+O(x^{b+1}))=1+\beta x^b+O(x^{b+1}),$ so that, after multiplication by $\sum \mathfrak d(s)x^s$, the power $x^b$ does not get reduced and appears, with some coefficient, in the Hilbert series, hence $b\in S$. But then this coefficient will be at least $\mathfrak d(b)+\beta\geq 1+1= 2$, contradiction.

\noindent {\bf Case 2:} $a<b$. Then $(1-\alpha x^a+O(x^{a+1}))(1+\beta x^b+O(x^{b+1}))=1-\alpha x^a+O(x^{a+1}),$ hence $\left( \sum\mathfrak d(s)x^s\right) (1-\alpha x^a+O(x^{a+1}))(1+\beta x^b+O(x^{b+1}))=\sum_{s\in S,s<a} \mathfrak d(s)x^{s}+\cdots$ and, by assumption, there is some $s\leq n_e<a$ with $\mathfrak d(s)\geq 2$, leading to a coefficient greater than 1 in the Hilbert series, contradiction. \\
\indent Note that having $\mathrm e_i\leq 0$ for all $i>n_e$ is impossible, as we would then 
get $\mathrm H_S(x)=\left( \sum\mathfrak d(s)x^s\right)\left(\sum_j r_jx^j\right ) $ 
with $r_j\ge 0$. On expanding this, one can find coefficients larger than 1 in $ \mathrm{H}_S(x) $. 
Therefore we can only have $\mathrm e_j\geq 0$ for all $j>n_e$.
\end{proof}

It follows that we can express the Hilbert series of a numerical semigroup $ S $ satisfying the conditions of Lemma \ref{ceva} as 
\begin{equation}
\label{hilbertdecompose}
\mathrm H_S(x)=\frac{(1-x^{d_1})^{\mathrm e_1}\cdots(1-x^{d_k})^{\mathrm e_k}}{(1-x^{n_1})\cdots(1-x^{n_e})},
\end{equation}
where $n_e<d_1<d_2<\cdots<d_k$ and $\mathrm e_i\ge 1$, $i=1,\ldots,k$ (with $\sum_{i=1}^k \mathrm e_i=m-1$, 
which follows on noting that $\mathrm H_S(x)$ must have a simple pole at $x=1$).
The conditions of 
Lemma \ref{ceva} are rather restrictive. However, solely from a factorization such as \eqref{hilbertdecompose}, it is easy to prove the following.
\begin{Lem}
Let $S=\langle n_1,\ldots,n_e \rangle$ be a minimally generated numerical semigroup such that \eqref{hilbertdecompose} holds. Then $d_i\in S$ for all $i=1,\ldots,k$ and $d_1=\min\left\lbrace s:  s\in\mathrm{Betti}(S)\right\rbrace $.
\end{Lem}   
\begin{proof} Rewrite \eqref{hilbertdecompose} as 
\begin{equation}
\label{betti}
(1-\mathrm e_1x^{d_1}+\cdots)\cdots(1-\mathrm e_kx^{d_k}+\cdots)=\sum_{s\in S} x^s\prod_{i=1}^{e}(1-x^{n_i}).
\end{equation}
The right-hand side of \eqref{betti} is of the form $\sum_{s\in S}r(s)x^s,$ for some $ r(s)\in\mathbb Z, $ while the left equals $1-\mathrm e_1x^{d_1}+O(x^{d_1+1})$. Then $ d_1\in S. $ Next, when expanding the left-hand side, the power $ x^{d_2} $ either gets cancelled by a power $ x^{\alpha d_1}, $ for some $ \alpha\in\mathbb N, $ or appears in the sum with a nonzero coefficient. Either case, $d_2\in S$ and we can repeat the same argument to show that $ d_3,\ldots,d_k\in S$. Combining \eqref{airways} and \eqref{betti} yields 
\[\mathrm H_S(x)=(1-\mathrm e_1x^{d_1}+\cdots) \sum\mathfrak d(s)x^s=\sum_{s\in S,s<d_1}\mathfrak d(s)x^s+(\mathfrak d(d_1)-\mathrm e_1)x^{d_1}+\cdots,\]
 hence $ d_1  $ is the first element $ s\in S $ with $ \mathfrak d(s)\ge 2 .$ We prove that this implies $d_1=\min\left\lbrace s:  s\in\mathrm{Betti}(S)\right\rbrace .$ Note that, by definition, $ \mathfrak d(s)\ge 2 $ for any $ s\in\mathrm{Betti}(S). $ Therefore it suffices to prove that $ d_1\in\mathrm{Betti}(S). $ 
 \par Let $ d_1=a_1n_1+\cdots+a_en_e =b_1n_1+\cdots+b_en_e $ be two different representations of $ d_1 $ in terms of the generators, with $ a_i,b_i\in\mathbb N. $ If there is $ 1\le i\le e $ such that $ a_i,b_i>0, $ then $ d_1-n_i\in S $ and $ \mathfrak d(d_1-n_i)\ge 2, $ contradiction. But this implies that the underlying graph of $ \Delta_{d_1} $ is disconnected, i.e., $ d_1\in\mathrm{Betti}(S). $ Indeed, take any two distinct representations $ d_1=a_1n_{i_1}+\cdots+a_kn_{i_k}=b_1n_{j_1}+\cdots+b_ln_{j_l}, $ where $ a_i,b_j>0, $ $ k,l\ge1 $ and $ \left\lbrace i_1,\ldots,i_k\right\rbrace \cap\left\lbrace j_1,\ldots,j_l\right\rbrace=\emptyset. $ Then there can be no edge between $ n_{i_{\alpha}} $ and $ n_{j_{\beta}} $ in the underlying graph of $ \Delta_{d_1} .$ Otherwise, if say, $ n_{i_1}n_{j_1} $ is an edge, then $ n:=d_1-n_{i_1}-n_{j_1}\in S $ and thus $ d_1-n_{i_1}=(a_1-1)n_{i_1}+a_2n_{i_2}+\cdots+a_kn_{{i_k}} =n+n_{j_1}$ admits at least two distinct representations, contradiction. Hence the vertices $  n_{i_{\alpha}} $ and respectively $  n_{j_{\beta}} $ lie in distinct connected components. Consequently, the underlying graph of $ \Delta_{d_1} $ is disconnected, that is, $ d_1\in\mathrm{Betti}(S). $  
\end{proof}

\section{Polynomially related numerical semigroups}

We say that a numerical semigroup $S$ is \emph{polynomially related
to} the numerical semigroup $T,$ and denote this by $ S\le_P T, $ if there exist $f(x)\in {\mathbb Z}[x]$ 
and an integer $w\ge 1$ such that
\begin{equation}
\label{StoT}
\mathrm H_S(x^w)f(x)=\mathrm H_T(x).
\end{equation}
From \eqref{StoT} we infer that 
\begin{equation}
\label{StoT2}
\mathrm P_S(x^w)f(x)=\mathrm P_T(x)(1+x+\dots+x^{w-1}).
\end{equation}
Note that \eqref{StoT} and \eqref{StoT2} are equivalent formulations of $S$
being polynomially related to $T$.
\begin{Example}
Put $S_1=\langle p,q\rangle$ and $S_3=\langle p^3,q\rangle$.  By \eqref{Pa1a2} we
have $\Phi_{pq}\Phi_{p^2q}\Phi_{p^3q}=\mathrm P_{S_3}$. Recall that $\mathrm P_{S_1}=\Phi_{pq}$.
We have
$$\mathrm P_{S_1}(x)\Phi_{p^2q}(x)\Phi_{p^3q}(x)=\mathrm P_{S_3}(x),~\mathrm P_{S_1}(x^p)\Phi_{pq}(x)\Phi_{p^3q}(x)=\mathrm P_{S_3}(x),~
\mathrm P_{S_1}(x^{p^2})\Phi_{pq}(x)\Phi_{p^2q}(x)=\mathrm P_{S_3}(x),$$
giving three different polynomial relations between $S_1$ and $S_3$.
\end{Example}
\begin{Lem}
Being polynomially related defines a partial order
on the set of numerical semigroups.
\end{Lem}
\begin{proof} Obviously a numerical semigroup is polynomially related to itself. 
Further, being polynomially related is clearly transitive. Using part d) of Lemma \ref{feen} we see
that $\mathrm F(S)<\mathrm F(T)$ unless $S=T$. This implies that being polynomially related defines an antisymmetric binary
relation on the set of numerical semigroups.
\end{proof}
\begin{Problem}
Find necessary and sufficient conditions for $S$ to be polynomially 
related to $T$.
\end{Problem}
In proving the following result we make repeatedly use of the fact
that $\mathrm  P_S(1)=1$ and $\mathrm  P_S'(1)=\mathrm g(S)$ (see Lemma \ref{gaps}.)
\begin{Lem}
\label{feen}
Suppose that $\mathrm H_S(x^w)f(x)=\mathrm H_T(x)$ holds with $S,T$ numerical semigroups. Then
\begin{enumerate}[a)]
\item $f(0)=1$.
\item $f(1)=w$.
\item $f'(1)=w(\mathrm g(T)-w\mathrm g(S)+(w-1)/2)$.
\item $\mathrm F(T)=w\mathrm F(S)+\mathrm{deg}(f)$.
\item If $w$ is even, then $f(-1)=0$.
\item If $w$ is odd, then $f(-1)=\mathrm P_T(-1)/\mathrm P_S(-1)$. 
\item If $T$ is cyclotomic, then so is $S$.
\item If $S$ is cyclotomic, then $T$ is cyclotomic if and only if $f$
is Kronecker.
\end{enumerate}
\end{Lem}
\begin{proof}$~$\\ 
a) We have $ \mathrm P_S(0)=\mathrm P_T(0)=1$.\\
b) On substituting $x=1$ in the identity \eqref{StoT2} and noting that $\mathrm P_S(1)=\mathrm P_T(1)=1$, we obtain $f(1)=w$.\\
c) The identity
\eqref{StoT2} yields (on differentiating both sides) that 
$$\mathrm P_S'(x^w)wx^{w-1}f(x)+\mathrm \mathrm \mathrm P_S(x^w)f'(x)=\mathrm P_T'(x)(1+x+\dots+x^{w-1})+\mathrm P_T(x)\sum_{j=0}^{w-2}(j+1)x^j.$$ The claim now 
easily follows
on setting $x=1$ and invoking part b).\\
d) Use that deg$( \mathrm P_S)=\mathrm F(S)+1$.\\
e)+f)  Note that $\mathrm P_S(-1)\ne 0$ and 
substitute $x=-1$ in (\ref{StoT2}).\\
g)+h) Obvious.
 \end{proof}

The next result gives more specific information about $f$ in
case $f$ has nonnegative coefficients only.
\begin{Lem}\label{lem-pol-rel-ei-iff}
Suppose that $S$ and $T$ are numerical semigroups. 
Then $\mathrm H_S(x^w)f(x)=\mathrm H_T(x)$ for some integer $w\ge 1$ and $f\in {\mathbb N}[x]$ if and only if
there are $0=e_1<e_2<\dots<e_w$ such that
$f(x)=\sum_{i=1}^w x^{e_i}$ and every $t\in T$ can be written in a unique way as
$$t=e_i+s\cdot w,~1\le i\le w,~s\in S.$$
\end{Lem}
\begin{proof}"$\Rightarrow$". If $f$ were to have a coefficient greater than $1$, this would lead to a coefficient greater than $1$ in $\mathrm H_T$, which
is not possible. By Lemma \ref{feen} we have $f(0)=1$ and $f(1)=w$, and hence it follows that
$f(x)=\sum_{i=1}^w x^{e_i}$ with $0=e_1<\dots<e_w$.
The identity $\sum_{i=1}^w x^{e_i}\sum_{s\in S}x^{sw}=\mathrm H_T(x)$ yields that every element $t\in T$ can
be written as $t=e_i+s\cdot w$, with $1\le i\le w$ and $s\in S$. Since every nonzero coefficient of $\mathrm H_T$ is $1$, this
writing way of $t$ must be unique.\\ 
"$\Leftarrow$". Obvious. 
\end{proof}

Compare the expression of $t$ in the above lemma with \cite[Lemma 2.6]{roos}.

\begin{Remark}
By Lemma \ref{feen} we have $\sum_{i=1}^w e_i=w(\mathrm g(T)-w\mathrm g(S)+(w-1)/2)$.
\end{Remark}

\begin{cor}$~$ \label{th-pol-related-emb-2}
\begin{enumerate}[a)]
\item We have $\langle p^a,q^b\rangle \le_P \langle p^m,q^n\rangle$ if $1\le a\le m$ and $1\le b\le n$.
\item We have $\langle p^a,q^b\rangle \le_P B_n(p,q)$ if $a,b\ge 1$ and $2\le a+b\le n+1$.
\item Let $V$ be a numerical semigroup generated by $\{n_1,\ldots,n_k\}$. Let
$d=\gcd(n_1,\ldots,n_{k-1})$ and set $U=S(n_1/d,\ldots,n_{k-1}/d,n_k)$. The numerical semigroup $U$ is polynomially related to $V$.
\end{enumerate}
\end{cor}
\begin{proof}$~$\\
{\rm a)} This is a consequence of the identity
\begin{equation}\label{ex}
\mathrm P_{\langle p^m,q^n\rangle}(x)=\prod_{\substack{1\le \alpha \le m\\ 1\le \beta \le n}}\Phi_{p^{\alpha}q^{\beta}}(x),
\end{equation}
which is a consequence of \eqref{Pa1a2}.\\
{\rm b)} Results on comparing \eqref{ex} with the factorization of $\mathrm P_B$ given in 
Example \ref{cee+}.\\ 
{\rm c)} It is easy to see (cf. \cite[Lemma 2.16]{roos}) that $\Ap(V;n_k)=d\Ap(U;n_k)$. By using this identity and \eqref{oudedoos} we derive 
\[\mathrm H_U(x^d)\left(\frac{1-x^{n_kd}}{1-x^{n_k}}\right)=\mathrm H_V(x).\qedhere\]
\end{proof}

\subsection{An application}
We will use our insights into polynomially related 
numerical semigroups to establish the following result.
\begin{Thm} 
\label{examp}
Let $p\neq q$ be primes and $m,n$ positive integers.
The quotient 
$$Q(x):=\mathrm P_{\langle p^m,q^n\rangle}(x)/\Phi_{p^mq^n}(x)$$ is  monic, is in ${\mathbb Z}[x]$ and has constant
coefficient $1$. Its nonzero 
coefficients alternate
between $1$ and $-1$.
\end{Thm}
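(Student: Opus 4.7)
The plan is to exhibit a numerical semigroup $T$ for which $Q(x)=\mathrm P_T(x)$. Once that identity is in hand, the three asserted properties come for free: $\mathrm P_T$ is monic with $\mathrm P_T(0)=1$ and integer coefficients by definition, and the corollary following Lemma \ref{coeff-PS} forces its nonzero coefficients to alternate between $1$ and $-1$.

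The candidate I would try is $T:=\langle p^m,\;p^{m-1}q^{n-1},\;q^n\rangle$, which collapses to $\mathbb N$ when $m=n=1$ (matching $Q=1$) and to $\langle p,q\rangle$ when exactly one of $m,n$ equals $1$. Taken in the ordering $(a_1,a_2,a_3)=(p^m,\,p^{m-1}q^{n-1},\,q^n)$ the partial gcds are $d_1=p^m$, $d_2=p^{m-1}$, $d_3=1$, so $c_2=p$ and $c_3=p^{m-1}$. One checks immediately that $c_2a_2=q^{n-1}\cdot p^m\in\langle a_1\rangle$ and $c_3a_3=q\cdot p^{m-1}q^{n-1}\in\langle a_1,a_2\rangle$, so the sequence is smooth and $T$ is free. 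Lemma \ref{duvel} then supplies
\[\mathrm P_T(x)=\frac{(1-x)(1-x^{p^m q^{n-1}})(1-x^{p^{m-1}q^n})}{(1-x^{p^m})(1-x^{p^{m-1}q^{n-1}})(1-x^{q^n})}.\]

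To finish I would show that $Q(x)$ equals the same rational function. Formula \eqref{Pa1a2} gives $\mathrm P_{\langle p^m,q^n\rangle}(x)=(1-x)(1-x^{p^m q^n})/((1-x^{p^m})(1-x^{q^n}))$; iterated application of the identity $\Phi_{pr}(x)=\Phi_r(x^p)$ (valid whenever $p\mid r$) reduces $\Phi_{p^m q^n}(x)$ to $\Phi_{pq}(x^{p^{m-1}q^{n-1}})$, and substituting $x\mapsto x^{p^{m-1}q^{n-1}}$ into \eqref{basic-2} yields
\[\Phi_{p^m q^n}(x)=\frac{(1-x^{p^m q^n})(1-x^{p^{m-1}q^{n-1}})}{(1-x^{p^m q^{n-1}})(1-x^{p^{m-1}q^n})}.\]
Taking the quotient and cancelling the common $(1-x^{p^m q^n})$ factor reproduces exactly the displayed expression for $\mathrm P_T(x)$. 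The only genuinely non-mechanical step is the choice of $T$: its third generator $p^{m-1}q^{n-1}$ is motivated by rewriting $Q$ via \eqref{ex} as $\prod_{(\alpha,\beta)\neq(m,n)}\Phi_{p^\alpha q^\beta}$ and searching for the smallest complete-intersection semigroup whose two gluings absorb precisely these cyclotomic factors; once $T$ is written down, the verification is purely formal rational-function bookkeeping.
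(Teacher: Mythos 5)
Your proof is correct, but it takes a genuinely different route from the paper's. The paper argues via Hilbert series: it first derives the identity $\mathrm H_{\langle p^m,q^n\rangle}(x)=\mathrm H_{\langle p,q\rangle}(x^{p^{m-1}q^{n-1}})\sum_{j=0}^{q^{n-1}-1}x^{jp^m}\sum_{k=0}^{p^{m-1}-1}x^{kq^n}$, observes that $\mathrm P_{\langle p,q\rangle}(x^{p^{m-1}q^{n-1}})=\Phi_{p^mq^n}(x)$, and then uses the fact that $\{\alpha p^m+\beta q^n\}$ is a complete residue system modulo $p^{m-1}q^{n-1}$ to conclude that $Q(x)=(1-x)\sum_{s\in S'}x^s$ for some cofinite set $S'\subseteq\mathbb N$ containing $0$ --- note that $S'$ is not claimed to be a semigroup, and this argument is the germ of the more general Theorem \ref{thm-pol-rel-ei}. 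You instead identify $Q$ outright as $\mathrm P_T$ for the explicit free numerical semigroup $T=\langle p^m,\,p^{m-1}q^{n-1},\,q^n\rangle$: your smoothness check is correct, Lemma \ref{duvel} applies, and the rational-function bookkeeping against \eqref{Pa1a2} and \eqref{basic-2} cancels exactly (it also agrees with the factorization $Q=\prod_{(\alpha,\beta)\neq(m,n)}\Phi_{p^\alpha q^\beta}$ coming from \eqref{ex} and \eqref{beeplus}). Your route buys a strictly stronger conclusion --- $Q$ is not merely alternating but is itself the semigroup polynomial of a complete intersection, hence Kronecker --- while the paper's route is less explicit here but generalizes to arbitrary polynomially related pairs. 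One inessential slip: for $m=1$ your $T$ collapses to $\langle p,q^{n-1}\rangle$, which equals $\langle p,q\rangle$ only when $n=2$; since Lemma \ref{duvel} does not require the generating sequence to be minimal, this does not affect the argument.
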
 
\begin{proof} On using that $\mathrm P_S(x)=(1-x)\mathrm H_S(x)$ and the identity \eqref{Pa1a2}, we infer
that  
\begin{equation}
\label{blubber}
\mathrm H_{\langle p^m,q^n\rangle}(x)=\mathrm H_{\langle p,q\rangle}(x^{p^{m-1}q^{n-1}})\sum_{j=0}^{q^{n-1}-1}x^{jp^m}\sum_{k=0}^{p^{m-1}-1}x^{kq^n}.
\end{equation}
The identity \eqref{ex} yields that $Q(x)$ is a polynomial in ${\mathbb Z}[x]$. On noticing that
$$\mathrm P_{\langle p,q\rangle}(x^{p^{m-1}q^{n-1}})=\Phi_{p^mq^n}(x)$$ we obtain from \eqref{blubber} that 
$$Q(x)=\frac{1-x}{ 1-x^{p^{m-1}q^{n-1}}}\sum_{j=0}^{q^{n-1}-1}x^{jp^m}\sum_{k=0}^{p^{m-1}-1}x^{kq^n}.$$
The set $$\{ \alpha p^m + \beta q^n \colon 0\le \alpha \le q^{n-1}-1,~0\le \beta \le p^{m-1}-1\}$$
forms a complete residue system modulo $p^{m-1}q^{n-1}$ and it follows
that around $x=0$ we can
write $Q(x)=(1-x)\sum_{s\in S'}x^s$ for some
set $S'$ containing zero and all large
enough integers. {}From this it follows that 
$Q(x)$ is a monic polynomial and that the nonzero coefficients of $Q(x)$ alternate between $1$ and $-1$. 
\end{proof}

\begin{Remark}
An alternative, much more conceptual proof 
of the identity \eqref{blubber}  is obtained on using the following
lemma; one notes that on writing down the Hilbert series
for both sides of \eqref{elf}, we obtain the identity \eqref{blubber}.
\end{Remark}

\begin{Lem}
\label{lemma}
Let $T=\langle p^m,q^n \rangle$ and $S=\langle p,q \rangle$. Every element of $T$ can be uniquely
written as
\begin{equation}
\label{elf}
t=\alpha p^m + \beta q^n + s p^{m-1}q^{n-1},0\le \alpha \le q^{n-1}-1,0\le \beta \le p^{m-1}-1,s\in S.
\end{equation}
\end{Lem}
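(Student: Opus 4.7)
My plan is to split the claim into uniqueness and existence, attacking them with different tools: the Chinese remainder theorem for uniqueness, and the Euclidean algorithm on a naive representation for existence.

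The key preliminary step is to verify that the finite set
\[\Lambda := \{\alpha p^m + \beta q^n : 0 \le \alpha \le q^{n-1}-1,~ 0 \le \beta \le p^{m-1}-1\}\]
is a complete system of residues modulo $p^{m-1}q^{n-1}$. Since $|\Lambda| = p^{m-1}q^{n-1}$, it suffices to show that distinct pairs $(\alpha,\beta)$ give distinct residues. Reducing $\alpha p^m + \beta q^n$ modulo $p^{m-1}$ annihilates $\alpha p^m$ and leaves $\beta q^n$; since $\gcd(q^n, p^{m-1}) = 1$, this residue determines $\beta \in \{0,\ldots,p^{m-1}-1\}$ uniquely. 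A symmetric reduction modulo $q^{n-1}$ determines $\alpha$, and CRT (using $\gcd(p^{m-1}, q^{n-1}) = 1$) combines the two pieces.

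Uniqueness in \eqref{elf} is then immediate: given any representation of $t$ of the stated form, reduction modulo $p^{m-1}q^{n-1}$ recovers an element of $\Lambda$, which uniquely pins down $(\alpha,\beta)$, and then $s$ is forced. For existence, I would pick any decomposition $t = a p^m + b q^n$ with $a,b \ge 0$, which exists because $t \in T$, and apply Euclidean division: write $a = \alpha + q^{n-1} a'$ with $0 \le \alpha \le q^{n-1}-1$, $a' \ge 0$, and $b = \beta + p^{m-1} b'$ with $0 \le \beta \le p^{m-1}-1$, $b' \ge 0$. Substitution gives
\[t = \alpha p^m + \beta q^n + (a' p + b' q)\, p^{m-1} q^{n-1},\]
and setting $s := a' p + b' q \in \langle p,q \rangle = S$ delivers the desired representation.

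The only real subtlety is the coprimality bookkeeping in the CRT step: one must reduce modulo $p^{m-1}$ and $q^{n-1}$ (not modulo $p^m$ and $q^n$), so that $\alpha p^m$ vanishes while $\beta q^n$ remains a unit there, and vice versa. Once this observation is in place, both halves of the argument are routine, and the $(a,b)$ chosen at the start of the existence step may be arbitrary since uniqueness guarantees the output is independent of that choice.
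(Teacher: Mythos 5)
Your proof is correct and follows essentially the same route as the paper's: existence via Euclidean division of $a$ and $b$ in a naive decomposition $t=ap^m+bq^n$ (yielding $s=a'p+b'q$), and uniqueness via the fact that the residue of $t$ modulo $p^{m-1}q^{n-1}$ pins down $(\alpha,\beta)$. The only difference is that you spell out the CRT verification that $\{\alpha p^m+\beta q^n\}$ is a complete residue system modulo $p^{m-1}q^{n-1}$, which the paper asserts without detail.
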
 
\begin{proof} Suppose that $t\in T$. Then
\begin{equation}
\label{sterr}
t=ap^m+bq^n=(q^{n-1}a_1+\alpha)p^m+(p^{m-1}b_1+\beta)q^n, 
\end{equation}
with
$0\le \alpha \le q^{n-1}-1$ and $0\le \beta \le p^{m-1}-1$.
Put $s=a_1p+b_1q$. Clearly $s\in S$. {}From \eqref{sterr} we then infer
that $t=\alpha p^m + \beta q^n + s p^{m-1}q^{n-1}$, as required.
The congruence class of $t$ modulo $p^{m-1}q^{n-1}$ determines $\alpha$ and $\beta$ uniquely.
Since $\alpha$ and $\beta$ are determined uniquely, so is $s$. 
\end{proof}

\noindent Theorem \ref{examp} can be alternatively proven on invoking the following more general result
together with Lemma \ref{lemma}.
\begin{Thm}\label{thm-pol-rel-ei}
Suppose that $S$ and $T$ are numerical semigroups with $\mathrm H_S(x^w)f(x)=\mathrm H_T(x)$ for some $w\ge 1$ and
$f\in {\mathbb N}[x]$.
Put $Q(x)=\mathrm P_T(x)/ \mathrm P_S(x^w)$. Then $Q(0)=1$, $Q(x)$ is a 
monic polynomial and its nonzero coefficients alternate between
$1$ and $-1$.
\end{Thm}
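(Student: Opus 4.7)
My plan is to turn the hypothesis $\mathrm H_S(x^w) f(x) = \mathrm H_T(x)$ into a clean expression for $Q$ as a rational function, show it is in fact a polynomial, and then read off the three remaining properties. Multiplying both sides by $(1-x)$ and using $\mathrm P_S(x^w) = (1-x^w) \mathrm H_S(x^w)$ and $\mathrm P_T(x) = (1-x) \mathrm H_T(x)$, the hypothesis rearranges to
\[
Q(x) \;=\; \frac{\mathrm P_T(x)}{\mathrm P_S(x^w)} \;=\; \frac{(1-x)\, f(x)}{1-x^w} \;=\; \frac{f(x)}{1 + x + \cdots + x^{w-1}}.
\]
Everything thus reduces to showing that $1 + x + \cdots + x^{w-1}$ divides $f(x)$ in $\mathbb Z[x]$, and then inspecting the resulting quotient.

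For the divisibility---which I expect to be the main obstacle---I invoke Lemma~\ref{lem-pol-rel-ei-iff} to write $f(x) = x^{e_1} + \cdots + x^{e_w}$ with $0 = e_1 < \cdots < e_w$, and observe that $\{e_1, \ldots, e_w\}$ must be a complete residue system modulo $w$ (otherwise every sufficiently large $t \in T$ in the common residue class would admit two representations $e_i + sw$, contradicting uniqueness, since $S$ is cofinite in $\mathbb N$). Now let $\zeta$ be any root of unity of order $d$ with $1 < d \mid w$. Grouping the $e_i$ by their residues mod $d$---each class containing exactly $w/d$ of them---gives
\[
f(\zeta) \;=\; \frac{w}{d} \sum_{r=0}^{d-1} \zeta^r \;=\; 0.
\]
Since every root of $1 + x + \cdots + x^{w-1} = \prod_{1 < d \mid w} \Phi_d(x)$ is such a $\zeta$, the required divisibility holds in $\mathbb Q[x]$, and by monicity also in $\mathbb Z[x]$. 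Hence $Q \in \mathbb Z[x]$.

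The remaining conclusions drop out easily. Evaluating at $x=0$ gives $Q(0) = f(0)/1 = 1$ since $e_1 = 0$, and $Q$ is monic because $Q\cdot\mathrm P_S(x^w) = \mathrm P_T(x)$ with both $\mathrm P_S(x^w)$ and $\mathrm P_T(x)$ monic. For the alternating $\pm 1$ pattern I expand
\[
Q(x) \;=\; (1-x)\, f(x) \sum_{k \ge 0} x^{kw} \;=\; (1-x) \sum_{n \in A} x^n,
\]
where $A = \{e_i + kw : 1 \le i \le w,\ k \ge 0\}$; the CRS property makes each representation of $n \in A$ unique, so the inner series has $0/1$ coefficients. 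The reasoning of Lemma~\ref{coeff-PS} then applies verbatim: the coefficient of $x^k$ in $Q$ equals $+1$, $-1$, or $0$ according to whether $k \in A$ and $k-1 \notin A$, $k \notin A$ and $k-1 \in A$, or neither. As $k$ grows, the indicator of $A$ starts at $1$ (since $e_1 = 0 \in A$), stabilizes at $1$ once $k \ge e_w$, and changes value only finitely often, so the nonzero coefficients of $Q$ alternate in the pattern $+1, -1, +1, \ldots, +1$.
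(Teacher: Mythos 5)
Your proof is correct and follows essentially the same route as the paper's: both invoke Lemma~\ref{lem-pol-rel-ei-iff} to write $f$ as a sum of $w$ monomials whose exponents form a complete residue system modulo $w$, reduce to $Q(x)=(1-x)f(x)/(1-x^w)$, and read off all three conclusions from the expansion $(1-x)\sum_{n\in A}x^n$ with $A$ containing $0$ and all sufficiently large integers. Your explicit root-of-unity verification that $1+x+\cdots+x^{w-1}$ divides $f$ is a small (valid) addition; the paper obtains polynomiality and monicity more directly from the cofiniteness of $A$.
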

\begin{proof} By Lemma \ref{lem-pol-rel-ei-iff} we can write $f(x)=\sum_{i=1}^w x^{e_i}$. Since $T$ contains
all integers sufficiently large, it follows
that $e_1,\ldots,e_w$ form a complete residue system modulo $w$.
By \eqref{StoT2} we see that
$$Q(x)=\frac{f(x)}{ 1-x^w}(1-x).$$
Around $x=0$ we have $f(x)/(1-x^w)=\sum_{z\in Z}x^z$ for some infinite set of integers $Z$. Since 
$e_1,\ldots,e_w$ form a complete residue system modulo $w$, it follows that all integers large enough are in $Z$.
{}From this we then infer that $Q(x)$ is a monic polynomial. 
Note that $Q(0)=f(0)=1$ by Lemma \ref{feen} and so $0\in Z$.
For any set $Z'\subseteq \mathbb N$ containing 0, the nonzero coefficients in
$(1-x)\sum_{z\in Z'}x^z$ alternate between $1$ and $-1$. 
\end{proof}

\section{Cyclotomic numerical semigroups of prescribed height and depth}
It follows from Lemma \ref{begin} and the identity 
\eqref{uitelkaar} that if $S$ is a cyclotomic numerical semigroup, then
$\mathrm P_S(x)\mid (x^m-1)^e$ for some integers $m$ and $\mathrm e$.\\
\indent We say that a numerical semigroup
$S$ is cyclotomic of \emph{depth} $d$ and  \emph{height} $h$ if $\mathrm P_S(x)\mid (x^d-1)^h$, where both $d$ and $h$
are chosen minimally, that is, $\mathrm P_S(x)$ does not divide $(x^n-1)^{h-1}$ for any $n$ and it does
not divide $(x^{d_1}-1)^h$ for any $d_1<d$.\\
\indent On noting that $\Phi_m(x)\mid (x^n-1)$ if and only if $m\mid n$ one arrives at the following conclusion.
\begin{Lem}
\label{floepie}
Suppose that $S$ is a cyclotomic numerical semigroup with $\mathrm P_S$
factorizing as in \eqref{basaal}, namely
$$\mathrm P_S(x)=\prod_{i=1}^s\Phi_{d_i}(x)^{e_{d_i}},$$ where $ d_i $ and $ e_{d_i} $ are positive integers.
Then $S$ is
of depth ${\rm lcm}(d_1,d_2,\ldots,d_s)$ and 
of height $\max\{e_1,\ldots,e_s\}$.
\end{Lem} 
\begin{Example} Consider the binomial semigroup $ B=B_n(p,q) $ defined in Example \ref{cee+}. 
By Lemma \ref{floepie} and on recalling the factorization \eqref{depths}, we see that $B$ is of depth $d=p^{n+1}q^{n+1}$ and of height $h=1$. 
\end{Example}
\begin{Problem}
\label{dh}
Classify all cyclotomic numerical semigroups having a prescribed depth and height.
\end{Problem}
In the other direction we might ask 
for divisors of $x^n-1$ that
are semigroup polynomials. 
Various authors studied the coefficients of divisors of $x^n-1$ \cite{DM, Kaplan, PR, RWW, SW}.
By Corollary \ref{corrie} we know that if a divisor $f(x)$ of $x^n-1$ is of 
the form $\mathrm P_S(x),$ then its
nonzero coefficients alternate between $1$ and $-1$.

We start with considering Problem \ref{dh} for height 
$h=1$. We will need the following trivial observation.
\begin{Lem}
\label{trieviaal}
If $S\ne \langle 1\rangle$, then $ \mathrm P_S(x)\equiv 1-x \pmod{x^2}$.
\end{Lem}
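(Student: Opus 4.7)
The plan is to invoke Lemma \ref{coeff-PS} and read off the two lowest coefficients of $\mathrm{P}_S(x)$ directly. Write $\mathrm{P}_S(x)=a_0+a_1x+\cdots+a_k x^k$; the claim $\mathrm{P}_S(x)\equiv 1-x\pmod{x^2}$ amounts to $a_0=1$ and $a_1=-1$.

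For $a_0$, take $s=0$: one has $0\in S$ (every numerical semigroup contains $0$) and $-1\notin S$ (the semigroup lives in $\mathbb{N}$), so Lemma \ref{coeff-PS} yields $a_0=1$. For $a_1$, take $s=1$: the hypothesis $S\ne\langle 1\rangle=\mathbb{N}$ forces $1\notin S$, because any numerical semigroup containing $1$ generates all of $\mathbb{N}$. Combined with $0\in S$, the second case of Lemma \ref{coeff-PS} gives $a_1=-1$. This is the whole argument.

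As a sanity check one can rederive this without the lemma using \eqref{psx}: since $1$ is necessarily a gap of $S$, the series $\sum_{s\notin S}x^s$ has the form $x+O(x^2)$, and therefore $(x-1)\sum_{s\notin S}x^s\equiv -x\pmod{x^2}$, so $\mathrm{P}_S(x)=1+(x-1)\sum_{s\notin S}x^s\equiv 1-x\pmod{x^2}$.

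There is no real obstacle here; the statement is essentially an immediate corollary of Lemma \ref{coeff-PS}, and the only subtlety is remembering that the condition $S\ne\langle 1\rangle$ is exactly what guarantees $1$ is a gap so that the coefficient of $x$ is forced to be $-1$ rather than $0$.
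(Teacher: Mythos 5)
Your proof is correct and rests on the same key observation as the paper's, namely that $0\in S$ and $1\notin S$ when $S\ne\langle 1\rangle$; the paper just notes directly that $\mathrm H_S(x)\equiv 1\pmod{x^2}$ and multiplies by $1-x$, rather than routing through Lemma \ref{coeff-PS}. Both arguments are fine and essentially identical in substance.
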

\begin{proof} If $S\ne \langle 1\rangle$, then $0\in S$ and $1\not\in S$ and
hence $\sum_{s\in S}x^s\equiv 1 \pmod{x^2}$. \end{proof}

\begin{Thm}
Let $p,q$ and $r$ be pairwise distinct primes. Suppose $S$ is cyclotomic of
depth $d=pqr$ and height $h=1$. Then $S=\langle pr,q\rangle$ up to a cyclic permutation of $p,q,r.$
\end{Thm}
\begin{proof} Suppose that $ \mathrm P_S(x)\mid x^{pqr}-1$ for some $S$. Then by \eqref{uitelkaar} and Lemma \ref{begin}
we have $\mathrm  P_S=\Phi_{pq}^{k_1}\Phi_{qr}^{k_2}\Phi_{pr}^{k_3}\Phi_{pqr}^{k_4}$ with $0\le k_i\le 1$.
Since the problem is symmetric in $p,q$ and $r$, we may assume without loss of generality  that $k_1\ge k_2\ge k_3$.
Note that, modulo $x^2$, $f(x)=1+(k_4-k_1-k_2-k_3)x$. On invoking Lemma \ref{trieviaal} we now deduce that $(k_1,k_2,k_3,k_4)\in \{(1,0,0,0),(1,1,0,1)\}$. The first case we can exclude,
as this leads to a depth $d=pq$. By \eqref{Pa1a2} we have  $\Phi_{pq}\Phi_{qr}\Phi_{pqr}=\mathrm P_{\langle pr,q\rangle}$. 
\end{proof}
\begin{Thm}
\label{pnq}
Suppose $T$ is a cyclotomic numerical semigroup of depth $d=p^nq$ and height $h=1$. Then
$T=\langle p^n,q\rangle$.
\end{Thm}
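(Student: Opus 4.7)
The plan is to pin down $\mathrm{P}_T$ as a product $\prod_{j\in J}\Phi_{p^jq}$ with $J\subseteq\{1,\dots,n\}$ and then to exclude every proper $J$ via a coefficient comparison at the key exponent $p^{k'-1}$. From Lemma \ref{begin}, the height-$1$ hypothesis, and the classification $\Phi_{p^m}(1)=p$ versus $\Phi_d(1)=1$ for composite $d$ that is not a prime power, combined with $\mathrm{P}_T(1)=1$, no prime-power divisor of $p^nq$ may appear in $\mathrm{P}_T$; hence $\mathrm{P}_T=\prod_{j\in J}\Phi_{p^jq}$ with $J\subseteq\{1,\dots,n\}$. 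The depth $p^nq$ forces $n\in J$, and Lemma \ref{trieviaal} together with the mod-$x^2$ expansions $\Phi_{pq}\equiv 1-x$ and $\Phi_{p^jq}\equiv 1\pmod{x^2}$ for $j\ge 2$ forces $1\in J$. If $J=\{1,\dots,n\}$ then $\mathrm{P}_T=\mathrm{P}_{\langle p^n,q\rangle}$ by \eqref{Pa1a2}, and since $\mathrm{P}_S$ determines $S$ via Lemma \ref{coeff-PS}, we conclude $T=\langle p^n,q\rangle$; this already covers $n\le 2$.

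To rule out a proper $J\subsetneq\{1,\dots,n\}$ (necessarily $n\ge 3$), let $k\in\{2,\dots,n-1\}$ be the smallest index missing from $J$ and let $k'\in\{k+1,\dots,n\}$ be the smallest index of $J$ exceeding $k$. Set $T_k:=\langle p^{k-1},q\rangle$, whose semigroup polynomial is $\prod_{j=1}^{k-1}\Phi_{p^jq}$ by \eqref{Pa1a2}, and factor $\mathrm{P}_T(x)=\mathrm{P}_{T_k}(x)\cdot R(x)$ with $R(x):=\prod_{j\in J,\,j\ge k'}\Phi_{p^jq}(x)$. Each factor $\Phi_{p^jq}(x)=\Phi_{pq}(x^{p^{j-1}})$ equals $1-x^{p^{j-1}}$ plus terms of degree at least $2p^{j-1}$, and $p^{j-1}\ge p^{k'-1}$ for every $j$ in the product; a direct expansion then yields
\[R(x)\equiv 1\pmod{x^{p^{k'-1}}}\quad\text{and}\quad R(x)\equiv 1-x^{p^{k'-1}}\pmod{x^{p^{k'-1}+1}},\]
so $\mathrm{P}_T\equiv\mathrm{P}_{T_k}\pmod{x^{p^{k'-1}}}$ and $[x^{p^{k'-1}}]\mathrm{P}_T(x)=[x^{p^{k'-1}}]\mathrm{P}_{T_k}(x)-1$.

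The contradiction is now combinatorial. The congruence and Lemma \ref{coeff-PS} give $T\cap[0,p^{k'-1}-1]=T_k\cap[0,p^{k'-1}-1]$; in particular $p^{k-1}\in T$, so semigroup closure yields $p^{k'-1}=p^{k'-k}\cdot p^{k-1}\in T$, and trivially $p^{k'-1}\in T_k$. Lemma \ref{coeff-PS} then forces both $[x^{p^{k'-1}}]\mathrm{P}_T$ and $[x^{p^{k'-1}}]\mathrm{P}_{T_k}$ into $\{0,1\}$, so the equation above compels $[x^{p^{k'-1}}]\mathrm{P}_{T_k}=1$ (hence $p^{k'-1}-1\notin T_k$) and $[x^{p^{k'-1}}]\mathrm{P}_T=0$ (hence $p^{k'-1}-1\in T$), contradicting the agreement of $T$ and $T_k$ on $[0,p^{k'-1}-1]$. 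The main obstacle is establishing the two $R$-congruences cleanly, verifying that no higher terms of the factors with $j>k'$ can spill into degree $\le p^{k'-1}$; once those are in hand, the rest is routine bookkeeping with Lemma \ref{coeff-PS}.
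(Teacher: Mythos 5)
Your proposal is correct and follows essentially the same route as the paper: reduce to $\mathrm P_T=\prod_{j\in J}\Phi_{p^jq}$, force $1,n\in J$ via Lemma \ref{trieviaal} and the depth, and then, at the first missing index, compare coefficients at the exponent $p^{k'-1}$ to contradict closure of $T$ under addition (this is exactly the content of the paper's Lemma \ref{rijtje}, where your $k-1$ and $k'$ play the roles of $j_1$ and $j_2$). The only cosmetic difference is that you run the final contradiction through the coefficients of $\mathrm P_T$ and Lemma \ref{coeff-PS} rather than through $\mathrm H_T$ directly, and in doing so you correctly place the key exponent at $p^{k'-1}$.
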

The proof makes use of the following lemma.
\begin{Lem}
\label{rijtje}
Let $k\ge 1$ be an integer, $0\le e_i\le 1$ ($i\in\{1,\ldots, k-1\}$) arbitrary and $e_k=1$.
Suppose that
\begin{equation}
\label{pablo}
\Phi_{pq}^{e_1}\Phi_{p^2q}^{e_2}\cdots \Phi_{p^kq}^{e_k}=\mathrm P_T,
\end{equation}
with $T$ a numerical semigroup. Then $e_i=1$ for $1\le i\le k$ and 
$T=\langle p^k,q\rangle $.
\end{Lem}
\begin{proof} In case $e_i=1$ for $1\le i\le k$ the identity \eqref{pablo} holds with $S=\langle p^k,q\rangle$ by 
\eqref{Pa1a2} with $a_1=p^k$ and $a_2=q$. 
Since, modulo $x^2$, $\Phi_{p^mq}=1$ for $m\ge 2$ and 
$\Phi_{pq}=1-x$, we infer that $e_1=1$.
Suppose now we are not in the case
where $e_i=1$ for $1\le i\le k$, hence the largest integer $j_1$ with $e_{j_1}=1$ satisfies $1\le j_1<k$.
We let $j_2$ be the smallest integer such that $j_2>j_1$ and $e_{j_2}=1$. Since $e_k=1$, $j_2$ exists.
We now rewrite the left-hand side of \eqref{pablo} as
$$\mathrm P_{\langle p^{j_1},q\rangle}(x)\Phi_{pq}(x^{p^{j_2}})^{e_{j_2}}\cdots \Phi_{pq}(x^{p^{k}})^{e_k},$$
which by \eqref{basic-2} equals, modulo $x^{p^{j_2}+1}$,
$$\mathrm P_{\langle p^{j_1},q\rangle}(x)(1-x^{p^{j_2}}).$$
{}From this and \eqref{pablo} we infer
that $$\sum_{s\in \langle p^{j_1},q\rangle}x^s (1-x^{p^{j_2}})\equiv \mathrm H_T(x) \pmod{x^{p^{j_2}+1}}.$$
It follows that $p^{j_1}\in T$ and $p^{j_2}\not \in T$ and hence $T$ is not a numerical semigroup, contradicting
our assumption. 
\end{proof}

\begin{proof}[Proof of Theorem \ref{pnq}]
By \eqref{uitelkaar} with $m=p^nq$ and Lemma \ref{lem-basaal} we deduce that
 \begin{equation}
\label{pablo2}
\mathrm P_T=\Phi_{pq}^{e_1}\Phi_{p^2q}^{e_2}\cdots \Phi_{p^{n}q}^{e_n},
\end{equation}
with $0\le e_i \le 1$.
Since, modulo $x^2$, $\Phi_{p^iq}=1$ for $i\ge 2$ and 
$\Phi_{pq}=1-x$, we infer that $e_1=1$. Note that $e_n=1$, for
otherwise $d\mid p^{n-1}q$. The proof is concluded with the help of Lemma \ref{rijtje}. 
\end{proof}

\section{Acknowledgements}
\par This project was started in August 2013, when the first author 
carried out an internship at the Max Planck Institute for Mathematics,  under the guidance of the third author. He would like to thank the staff for the opportunity given and the very pleasant and productive stay. In January 2014 the first and second author were invited to the Max Planck Institute for 
Mathematics to work on this paper. They thank the staff for hospitality. The first and 
third author thank the second author for
inviting them to the INdAM meeting on numerical semigroups (Cortona, 2014), which gave them
a lot of additional motivation and inspiration. In September 2015 Caterina Viola did an internship at the Max Planck Institute for Mathematics, being supervised by the third author. The authors would like to thank her for the inspiring discussions which gave them further insight into the project. Last, not least, the authors thank the referee for the helpful remarks and suggested modifications.

\end{document}